\newtheorem{theorem}{Theorem}[section]
\newtheorem{lemma}[theorem]{Lemma}
\newtheorem{corollary}[theorem]{Corollary}
\newtheorem{proposition}[theorem]{Proposition}
\theoremstyle{definition}
\newtheorem{definition}[theorem]{Definition}
\theoremstyle{remark}
\newtheorem{remark}[theorem]{Remark}
\numberwithin{equation}{section}
\begin{document}

\title{Einstein manifolds and  curvature operator of the second kind}

\author{Zhi-Lin Dai}

\address{Department of Mathematics,  Nanchang University, Nanchang, P. R. China 330031}

\author{Hai-Ping Fu}
\address{Department of Mathematics,  Nanchang University, Nanchang, P. R. China 330031}
\email{mathfu@126.com}
\thanks{Supported in part by National Natural Science Foundation of China \#12271069,  Jiangxi Province
Natural Science Foundation of China \#20202ACB201001.}

\subjclass[2020]{Primary 53C24, 53C20}

\date{}

\keywords{Bochner technique, curvature operator of the second kind, Einstein manifolds}

\begin{abstract}
We prove that a compact Einstein manifold  of dimension $n\geq 4$ with nonnegative curvature operator of the second kind is a constant curvature space by Bochner technique. Moreover, we obtain that  compact Einstein manifolds  of dimension $n\geq 11$ with $\left [ \frac{n+2}{4}  \right ]$-nonnegative curvature operator of the second kind,  $4\ (\mbox{resp.},8,9,10)$-dimensional compact Einstein manifolds with $2$-nonnegative curvature of the  second kind and \linebreak   $5$-dimensional compact Einstein manifolds with $3$-nonnegative curvature of the second kind are   constant curvature spaces. Combing with Li's result \cite{L2}, we have  that a compact Einstein manifold  of dimension $n\geq 4$ with $\max\{4,\left [ \frac{n+2}{4}  \right ]\}$-nonnegative curvature operator of the second kind is a constant curvature space.
\end{abstract}

\maketitle

\section{Introduction}
In a Riemannian manifold, the Riemannian curvature tensor $R$ can induce two symmetric linear operators $\hat{R}$ and ${\mathop R\limits^ \circ  }$. Let $V$ be the tangent space of this Riemannian manifold,  $\wedge ^{2} \left ( V \right )$ denote the space of skewsymmetric $2$-tensor over $V$, and $S_0^2\left( V \right)$ denote the space of trace-free symmetric $2$-tensor over $V$.
The curvature operator of the first kind
$\hat{R}:\wedge ^{2} \left ( V \right ) \to  {\wedge ^{2} \left ( V \right )} $ is defined  by
$$\hat{R}\left ( e_{i}\wedge{e_{j} }  \right ) =\frac{1}{2} \sum_{k,l}^{} R_{ijkl} e_{k}\wedge{e_{l} },$$
and the curvature operator of the second kind
${\mathop R\limits^ \circ  } : S_0^2\left( V \right) \to S_0^2\left( V \right)$ is defined  in \cite{BK} by
$${\mathop R\limits^ \circ  } \left( {{e_i} \odot {e_j}} \right) = \sum\limits_{k,l} {{R_{iklj}}{e_k} \odot {e_l}}. $$
More details about the curvature operator of the second kind will be given below.

Studying Riemannian curvature operator of the second kind can be regarded as originating from the Nishikawa's conjecture in \cite {N} that a closed simply connected Riemannian manifold for which the curvature operator of the second kind is nonnegative and positive is diffeomorphic to a Riemannian locally symmetric space and a spherical space form, respectively.  Recently, Cao, Gursky and Tran \cite{CGT} proved the Nishikawa's conjecture in the case of $2$-positive curvature operator of the second kind. Later, Li \cite{L}  improved this result of Cao-Gursky-Tran to $3$-positive curvature operator of the second kind and proved the Nishikawa's conjecture in the case of three-nonnegative curvature operator of the second kind. Li also investigates how the curvature operator of the second kind affects the decomposability of manifolds. The relationship between the  $\frac{{3n}}{2}\frac{{n + 2}}{{n + 4}}$-nonnegative curvature operator of the second kind and the Betti numbers of Riemannian manifolds has been studied by using the Bochner technique in \cite{NPW} and \cite{NPWW}.

 Kashiwada \cite{K}  has shown that Riemannian manifolds with harmonic curvature  for which the curvature operator of the second kind are nonnegative and positive are locally symmetric spaces and constant curvature spaces, respectively.  Cao, Gursky and Tran \cite{CGT} have  shown that Einstein manifolds of dimension $n\geq4$ with  four-positive curvature operator of the second kind and four-nonnegative curvature operator of the second kind are constant curvature spaces and locally symmetric spaces, respectively, which  were then weakened to $4\frac12$-positive curvature operator of the second kind and $4\frac12$-nonnegative curvature operator of the second kind by Li in \cite{L2}. First, they proved that  Einstein manifolds of dimension $n\geq4$ with  $4\frac12$-positive curvature operator of the second kind and $4\frac12$-nonnegative curvature operator of the second kind satisfy PIC and nonnegative isotropic curvature, respectively. Second, by using Brendle's result in \cite{Br} that Einstein manifolds of dimension $n\geq4$ with PIC (resp., nonnegative isotropic curvature) are constant curvature spaces (resp., locally symmetric spaces), they obtained the above result. Nienhaus, Petersen and Wink \cite{NPW} show that  $n$-dimensional compact Einstein manifolds with $k(<\frac{3n(n+2)}{2(n+4)})$-nonnegative
curvature operators of the second kind are either rational homology spheres or flat. We refer to \cite{{CMR},{PW}} for details on compact Riemannian manifolds with $k$-positive curvature operator of the first kind by using Bochner technique.

  In this paper, we investigate Einstein manifolds of dimension $n\geq4$ with nonnegative curvature operator of the second kind, and obtain the following Theorems 1.1 and 1.2.

 \begin{theorem}
\label{th1.1}
Let $\left( {M,g} \right)$ be a  compact Einstein manifold of dimension $n\geq 6$, and ${\mathop R\limits^ \circ  }$ be the curvature operator of the second kind. If one of the following conditions holds:

(1) $6\leq n\leq7$ and ${\mathop R\limits^ \circ  }$ is nonnegative;

(2) $8\leq n\leq 10$ and ${\mathop R\limits^ \circ  }$ is $2$-nonnegative;

(3) $n\geq 11$ and ${\mathop R\limits^ \circ  }$ is $ [\frac {n+2}{4}]$-nonnegative,

\noindent
 then $\left( {M,g} \right)$ is a constant curvature space.
\end{theorem}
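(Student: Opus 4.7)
The plan is to apply the Bochner technique to the Weyl tensor $W$. Since $(M,g)$ is Einstein, the contracted second Bianchi identity gives $\delta W = 0$, so $W$ is harmonic and satisfies a Weitzenb\"ock identity of the form
\begin{equation*}
\tfrac{1}{2}\Delta |W|^2 \;=\; |\nabla W|^2 + Q(R,W),
\end{equation*}
where $Q(R,W)$ is a quadratic curvature expression in $W$. The strategy is to prove $Q(R,W)\ge 0$ pointwise under the $k$-nonnegativity hypothesis on $\mathring R$; integration over the compact $M$ then gives $\nabla W = 0$ with $Q(R,W)\equiv 0$, and the equality case should force $W\equiv 0$, i.e.\ $(M,g)$ has constant sectional curvature.

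The bridge between $Q(R,W)$ and the curvature operator of the second kind comes from viewing the Weyl tensor as a symmetric endomorphism $\widehat W\colon S_0^2(V)\to S_0^2(V)$ on the $N=\tfrac{n(n+1)}{2}-1$ dimensional space of trace-free symmetric $2$-tensors. Following the identity underlying the work of Kashiwada, Cao--Gursky--Tran, Li, and Nienhaus--Petersen--Wink, the curvature contribution to $Q(R,W)$ can be rearranged so that, in an orthonormal basis $\{u_\alpha\}$ of $S_0^2(V)$ diagonalising $\mathring R$ with eigenvalues $\lambda_1\le\cdots\le\lambda_N$, it takes the shape $\sum_\alpha \lambda_\alpha \mu_\alpha$ plus a scalar-curvature term, where $\mu_\alpha := |\widehat W(u_\alpha)|^2\ge 0$ and $\sum_\alpha \mu_\alpha$ equals a universal constant times $|W|^2$.

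The core of the argument is then a dimension-dependent algebraic lemma. The first Bianchi identity and the tracelessness of $W$ should constrain the weights by a sharp pointwise bound $\mu_\alpha \le C_n |W|^2$ on each $\mu_\alpha$. Combined with the $k$-nonnegativity $\lambda_1+\cdots+\lambda_k\ge 0$, this should produce $\sum_\alpha \lambda_\alpha \mu_\alpha \ge -\tfrac{\mathrm{scal}}{n}|W|^2$ precisely when $k$ exceeds a threshold $k_0(n)$ obtained by balancing the number of possibly-negative $\lambda_\alpha$ against the single-weight constant $C_n$. I expect this balancing to produce the three cases in the theorem: $k_0=1$ for $6\le n\le 7$, $k_0=2$ for $8\le n\le 10$, and $k_0 = [\tfrac{n+2}{4}]$ for $n\ge 11$. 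The main obstacle is exactly this algebraic bookkeeping: extracting the sharp constant $C_n$ requires exploiting the symmetries of $\widehat W$ as a section of the symmetric square of $S_0^2(V)$, and the three dimensional cases reflect where simpler global estimates cease to suffice. A secondary subtlety is the rigidity step, since $\nabla W = 0$ alone only yields a locally symmetric space; one must revisit the equality case of the algebraic lemma to upgrade the conclusion from locally symmetric to $W\equiv 0$.
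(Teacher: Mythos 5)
There is a genuine gap: your Weitzenb\"ock formula has the wrong shape, and the terms you omit are precisely the ones that generate the three dimensional cases of the theorem. On an Einstein manifold the paper's key identity (3.3)/(3.9) (equivalent to a Bochner formula for $W$, since $s$ is constant) reads
\begin{equation*}
3\langle\Delta R,R\rangle=\sum_\alpha\lambda_\alpha\left|S^\alpha W\right|^2+16\sum_\alpha\lambda_\alpha^3+8\left(\frac{2n^2-22n+8}{3n^2(n-1)}\right)s\sum_\alpha\lambda_\alpha^2+c_n s^3,
\end{equation*}
where $S^\alpha W$ is the \emph{derivation} action of Definition 2.1, not $|\widehat W(u_\alpha)|^2$ (the latter equals $\omega_\alpha^2$ on a common eigenbasis and would reproduce only powers of eigenvalues). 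Your proposed structure ``$\sum_\alpha\lambda_\alpha\mu_\alpha$ plus a scalar-curvature term'' accounts only for the first summand. That summand is indeed handled exactly as you describe: the paper computes $\sum_\alpha|S^\alpha W|^2=\frac{2(n^2+n-8)}{n}|W|^2$, proves the sharp single-weight bound $|S^\alpha W|^2\le\frac{8(n-2)}{n}|W|^2$ by a Lagrange-multiplier analysis over the eigenvalues of $S^\alpha$, and the ratio $\frac{n^2+n-8}{4(n-2)}\ge[\frac{n+2}{4}]$ feeds into the weighted-sum lemma. But the remaining terms $16\sum\lambda_\alpha^3$ and $s\sum\lambda_\alpha^2$ are genuinely cubic in the spectrum of ${\mathop R\limits^ \circ}$, the coefficient of $s\sum\lambda_\alpha^2$ is \emph{negative} for $n\le 10$, and no pointwise bound $\mu_\alpha\le C_n|W|^2$ controls them. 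The paper instead performs a constrained minimization of $BC\sum_i\lambda_i^2+\sum_i\lambda_i^3$ over nondecreasing sequences with $\sum_{i\le k}\lambda_i\ge0$ and $\sum_i\lambda_i=\frac{n+2}{2n}s$ fixed (Lemmas 4.4, 4.6 and Remark 4.5), locates the extremal spectra such as $(0,\dots,0,\frac{C}{N-k},\dots,\frac{C}{N-k})$ and $(-\frac{(k-1)C}{N-k},\frac{C}{N-k},\dots,\frac{C}{N-k})$, and verifies positivity of the value there. It is this analysis, not a single balancing constant $k_0(n)$, that produces the split $k=1$ for $6\le n\le7$, $k=2$ for $8\le n\le 10$, and $k=[\frac{n+2}{4}]$ for $n\ge11$.

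The rigidity step you flag as a secondary subtlety is also resolved by the cubic analysis rather than by the weighted-sum lemma: once $\langle\Delta R,R\rangle\ge0$ is established, compactness forces $\langle\Delta R,R\rangle\equiv0$, and inspection of the minimizers shows this happens only when $\lambda_1=\cdots=\lambda_N$, i.e.\ ${\mathop R\limits^ \circ}$ is a constant multiple of the identity on $S_0^2(V)$, which yields constant sectional curvature directly --- one never passes through ``locally symmetric.'' Without the cubic terms in hand you have no mechanism for this equality characterization, so as written the proposal cannot close either the positivity estimate or the rigidity.
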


\begin{theorem}
\label{th1.3}
Let $\left( {M,g} \right)$ be an $n(n=4,5)$-dimensional  compact Einstein manifold, ${\mathop R\limits^ \circ  }$ be the curvature operator of the second kind. Then

(1) if $n=4$ and ${\mathop R\limits^ \circ  }$ is $2$-nonnegative, then $(M,g)$ is a constant space;

(2) if $n=5$ and ${\mathop R\limits^ \circ  }$ is $3$-nonnegative, then $(M,g)$ is a constant space.
\end{theorem}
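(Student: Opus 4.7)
The plan is to run a Bochner-type argument on the Weyl tensor $W$, in the same spirit as Theorem~\ref{th1.1}. Because $(M,g)$ is a compact Einstein manifold, the standard Weitzenb\"ock formula for $|W|^2$ takes the shape
$$\frac12\Delta|W|^2 = |\nabla W|^2 + \frac{c_n R}{n}|W|^2 + Q(W),$$
where $Q(W)$ is a cubic trace expression in $W$ that one reorganizes into the form $\sum_a \langle \mathring R(T_a),T_a\rangle$ for finitely many trace-free symmetric $2$-tensors $T_a\in S_0^2(V)$ built algebraically from $W$. Integrating over $M$ and using the Einstein and $k$-nonnegativity hypotheses, the goal is to conclude $|\nabla W|^2 + \text{(nonneg)}=0$, which forces $\nabla W=0$; combined with standard Einstein-manifold arguments this then gives $W=0$ and hence constant sectional curvature.

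Reducing to a pointwise algebraic inequality is the heart of the proof: one must show that, for the specific pairs $(n,k)=(4,2)$ and $(5,3)$, if $\mathring R$ is $k$-nonnegative then $Q(W)$ plus the (sign-controlled) scalar curvature term is $\ge 0$ pointwise. The natural device is to produce $k$ orthonormal tensors $T_1,\dots,T_k\in S_0^2(V)$ such that the curvature-quadratic part of $Q(W)$ equals a positive multiple of $\sum_{a=1}^k \langle \mathring R(T_a),T_a\rangle$ plus terms with a favourable sign; then $k$-nonnegativity yields the required bound at once.

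In $n=4$ I would exploit the orthogonal splitting $W=W^+\oplus W^-$, diagonalizing each of $W^\pm$ as a trace-free symmetric endomorphism of $\wedge^2_\pm\cong\mathbb{R}^3$ with eigenvalues $\lambda_1^\pm,\lambda_2^\pm,\lambda_3^\pm$ summing to zero. The resulting eigenvalue structure lets one express the cubic invariants in $Q(W)$ through just two natural trace-free symmetric $2$-tensors on $V$, so that $2$-nonnegativity is enough. In $n=5$, where no such self-dual splitting is available, one instead diagonalizes $W$ in an orthonormal frame adapted to its principal directions and shows that at most three test tensors $T_1,T_2,T_3$ are needed, matching the $3$-nonnegativity hypothesis.

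The principal obstacle is the explicit algebra behind these reductions: identifying the correct trace-free symmetric tensors $T_a$ in each dimension and verifying, by direct but delicate computation, that the quadratic curvature part of the Bochner identity is exhausted by exactly $k$ of them. This is a purely linear-algebraic task, but one that requires careful bookkeeping with the symmetries of the Weyl tensor and the action of $\mathring R$ on $S_0^2(V)$ in the low-dimensional setting, where dimension counts are tight enough that the sharp constants $k=2$ (for $n=4$) and $k=3$ (for $n=5$) actually go through.
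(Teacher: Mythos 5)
There is a genuine gap. The paper's proof of this theorem does not attempt to control the Weyl cubic term by a small number of test tensors; instead it \emph{eliminates} that term using an identity special to low dimensions, namely the Jack--Parker identity $W_{tpsq}W_{tpkl}W_{sqkl}=2\,W_{tspq}W_{tjpl}W_{sjql}$, valid for $n\le 5$. Substituting this into the expansions of $\alpha=R_{tpsq}R_{tpkl}R_{sqkl}$ and $\beta=R_{tspq}R_{tjpl}R_{sjql}$ for an Einstein metric, and combining with Proposition \ref{pro3.1} and Lemma \ref{pro3.3}, yields
$$\langle \Delta R,R\rangle=\frac{16(n-4)}{3(n-1)(n+2)}\sum\lambda_\alpha\Bigl(\sum\lambda_\alpha^2-\tfrac1N\bigl(\sum\lambda_\alpha\bigr)^2\Bigr)+8\Bigl(\sum\lambda_\alpha^3-\tfrac{1}{N^2}\bigl(\sum\lambda_\alpha\bigr)^3\Bigr),$$
a pure symmetric function of the eigenvalues of ${\mathop R\limits^ \circ}$, which is then minimized under the constraint $\sum_{i=1}^k\lambda_i\ge0$ exactly as in the proof of Theorem \ref{th1.1}. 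Your plan instead keeps the term $\sum_\alpha\lambda_\alpha|S^\alpha W|^2$ and proposes to dominate it by $k$ orthonormal test tensors ($k=2$ for $n=4$, $k=3$ for $n=5$). This is precisely where the argument breaks: that term is a weighted sum over \emph{all} $N=\tfrac{(n-1)(n+2)}{2}$ eigendirections of ${\mathop R\limits^ \circ}$ (with $N=9$, resp.\ $14$), and the sharp weight ratio from Lemma \ref{lem4.2} is $\frac{\sum|S^\alpha W|^2}{\max_\alpha|S^\alpha W|^2}\ge\frac{n^2+n-8}{4(n-2)}$, which equals $\tfrac32$ for $n=4$ and $\tfrac{11}{6}$ for $n=5$ --- strictly below $2$ and $3$. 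So the pigeonhole device (Lemma \ref{lem4.1}) cannot reach $k$-nonnegativity with $k=2,3$, and you give no construction showing that the cubic Weyl invariants of a generic $4$- or $5$-dimensional Weyl tensor are exhausted by only two (or three) elements of the $9$- (or $14$-) dimensional space $S^2_0(V)$; the self-dual splitting in $n=4$ diagonalizes $W$ on $\wedge^2$, not on $S^2_0(V)$, and does not produce such a reduction.

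A second, independent problem is your endgame: concluding $\nabla W=0$ and then ``$W=0$ by standard Einstein-manifold arguments.'' Parallel Weyl tensor on a compact Einstein manifold does not force $W=0$ ($\mathbb{CP}^2$ and $S^2\times S^2$ are Einstein and locally symmetric with $W\ne0$). To get constant curvature you must analyze the equality case of the algebraic inequality, as the paper does: equality forces all eigenvalues $\lambda_\alpha$ of ${\mathop R\limits^ \circ}$ to coincide, hence ${\mathop R\limits^ \circ}$ is a multiple of the identity on $S^2_0(V)$ and the metric has constant sectional curvature.
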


\begin{corollary}
Let $\left( {M,g} \right)$ be a  compact Einstein manifold of dimension $n\geq 4$ with nonnegative curvature operator of the second kind. Then $\left( {M,g} \right)$ is a constant curvature space.
\end{corollary}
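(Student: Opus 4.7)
The plan is to derive this corollary as a direct case-by-case consequence of Theorems~\ref{th1.1} and \ref{th1.3}. The key elementary observation is that if every eigenvalue of ${\mathop R\limits^ \circ  }$ is nonnegative, then so is the sum of any $k$ of the smallest eigenvalues; hence the hypothesis ``${\mathop R\limits^ \circ  } \geq 0$'' automatically implies $k$-nonnegativity of ${\mathop R\limits^ \circ  }$ for every integer $k\geq 1$. Thus every $k$-nonnegativity assumption appearing in Theorems~\ref{th1.1} and \ref{th1.3} is satisfied under the single hypothesis of the corollary, and what remains is just to check that these theorems cover all dimensions $n\geq 4$.

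First I would dispose of the low-dimensional cases using Theorem~\ref{th1.3}. In dimension $n=4$, nonnegativity of ${\mathop R\limits^ \circ  }$ gives in particular $2$-nonnegativity, so Theorem~\ref{th1.3}(1) yields constant curvature; in dimension $n=5$ it gives $3$-nonnegativity, so Theorem~\ref{th1.3}(2) yields constant curvature. Next, for $6\leq n\leq 7$, Theorem~\ref{th1.1}(1) applies verbatim. For $8\leq n\leq 10$, nonnegativity implies $2$-nonnegativity, and Theorem~\ref{th1.1}(2) gives the conclusion. Finally, for $n\geq 11$, nonnegativity trivially implies $\left[\frac{n+2}{4}\right]$-nonnegativity, so Theorem~\ref{th1.1}(3) finishes the argument.

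The main (and essentially only) thing to verify is that the five dimension ranges $\{4\}$, $\{5\}$, $\{6,7\}$, $\{8,9,10\}$, and $\{n\geq 11\}$ partition the integers $n\geq 4$, which is immediate. There is no genuine analytic or geometric obstacle here: the entire content of the corollary is already contained in Theorems~\ref{th1.1} and \ref{th1.3}, and the corollary merely packages them into a uniform statement under the simplest curvature hypothesis ``${\mathop R\limits^ \circ  }\geq 0$'' throughout the range $n\geq 4$.
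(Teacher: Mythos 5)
Your proof is correct and is exactly the argument the paper intends (the paper leaves it implicit): nonnegativity of ${\mathop R\limits^ \circ}$ implies $k$-nonnegativity for every $k$, so Theorems~1.1 and~1.2 together cover all dimensions $n\geq 4$. Nothing further is needed.
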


\begin{remark}
For compact Einstein manifolds of dimension $n\geq 4$ with nonnegative curvature operator of the second kind, the conclusion that has been proved before is that they are Riemannian locally symmetric spaces. We prove these theorems only using Bochner technique without using Ricci flow. In \cite{DF}, the authors should prove that  a compact manifold with harmonic Weyl tensor and nonnegative curvature operator of the second kind is  globally conformally equivalent to a space of positive constant curvature or is isometric to a  flat manifold, by using Corollary 1.3.
\end{remark}

It is easy to see from d) of Theorem 3.6 in \cite{NPW} that the curvature operator of the second kind which is $4$-nonnegative  is either $4\frac12$-positive or nonnegative. Combing Theorem 1.1 and Corollary 1.3 with Theorem 1.9 in \cite{L2}, we have the following:
\begin{corollary}
Let $\left( {M,g} \right)$ be a  compact Einstein manifold of dimension $n\geq 4$ with $\max\{4,\left [ \frac{n+2}{4}  \right ]\}$-nonnegative curvature operator of the second kind. Then $\left( {M,g} \right)$ is a constant curvature space.
\end{corollary}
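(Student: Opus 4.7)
The plan is to split on which quantity realises the maximum $\max\{4,\left[\frac{n+2}{4}\right]\}$. A direct check shows that $\left[\frac{n+2}{4}\right]\ge 4$ exactly when $n\ge 14$, so for $n\ge 14$ the hypothesis of the corollary simply says that ${\mathop R\limits^ \circ  }$ is $\left[\frac{n+2}{4}\right]$-nonnegative. In this regime I would invoke Theorem \ref{th1.1}(3) directly, obtaining that $(M,g)$ has constant curvature with no further work.

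For $4\le n\le 13$ the maximum equals $4$, so the hypothesis now reads that ${\mathop R\limits^ \circ  }$ is $4$-nonnegative. Here the key input is the algebraic dichotomy recorded in the paragraph preceding the corollary, namely part (d) of Theorem 3.6 in \cite{NPW}: any curvature operator of the second kind that is $4$-nonnegative is automatically either $4\frac12$-positive or nonnegative. My plan is to split on this alternative. In the $4\frac12$-positive branch I would apply Theorem 1.9 of \cite{L2} to conclude constant curvature, and in the nonnegative branch I would apply Corollary 1.3 of the present paper to reach the same conclusion.

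Combining the two ranges proves the corollary for every $n\ge 4$. There is no new analytic obstacle in this final assembly; the real content sits in Theorem \ref{th1.1} (the high-dimensional input) and Corollary 1.3 (which itself rests on Theorems \ref{th1.1} and \ref{th1.3}), together with Theorem 1.9 of \cite{L2}. The one delicate point is that in the three dimensions $n=11,12,13$ one has $\left[\frac{n+2}{4}\right]=3<4$, so $4$-nonnegativity is strictly weaker than the $\left[\frac{n+2}{4}\right]$-nonnegativity that Theorem \ref{th1.1}(3) requires; the \cite{NPW} dichotomy is precisely what absorbs this gap by routing these dimensions through the nonnegative or $4\frac12$-positive branch rather than through Theorem \ref{th1.1}(3).
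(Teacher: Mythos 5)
Your proposal is correct and follows exactly the route the paper intends: for $n\ge 14$ apply Theorem \ref{th1.1}(3) directly, and for $4\le n\le 13$ use the dichotomy from part (d) of Theorem 3.6 in \cite{NPW} ($4$-nonnegative implies $4\tfrac12$-positive or nonnegative), handling the branches via Theorem 1.9 of \cite{L2} and Corollary 1.3 respectively. Your remark about $n=11,12,13$ correctly identifies why the dichotomy, rather than Theorem \ref{th1.1}(3), is needed there.
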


{Acknowledgement:} The authors are very grateful to the referee for his/her helpful suggestions. The second author also sincerely thanks Dr. Xiaolong Li for his helpful suggestions.


\section{Preliminaries}
Let $\left( {V,g} \right)$ be an $n$-dimensional Euclidean vector space and $\left\{ {{e_i}} \right\}_{i = 1}^n$ be an orthonormal basis for $V$. We recall that the second-order tensor space can be decomposed by
$${T^2}\left( V \right) = { \wedge ^2}\left( V \right) \oplus {S^2}\left( V \right),$$ where ${ \wedge ^2}\left( V \right)$ and ${S^2}\left( V \right)$ is the second-order antisymmetric tensor space and the second-order symmetric tensor space on $V$, respectively. Every element of ${S^2}\left( V \right)$ can  be seen as a self-adjoint linear operator on vector space $V$. ${S^2}\left( V \right)$ is not irreducible under the action of the orthogonal group on $V$.
Let $S_0^2\left( V \right)$ denote the space of the second-order traceless symmetric tensor space on $V$. Thus  ${S^2}\left( V \right)$ can be decomposed into irreducible subspaces by
$${S^2}\left( V \right) = S_0^2\left( V \right) \oplus \mathbb{R}g,$$
where $g = \sum\limits_{i = 1}^n {{e_i} \otimes {e_i}} $ is the identity on $V$.

Let $V$ be  the tangent  space  of an $n$-dimensional Riemannian manifold $\left( {M,g} \right)$ at  $p\in M$. Thus the $\left( {0,4} \right)$-Riemannian curvature tensor $R$ is defined as $$R\left( {X,Y} \right)Z =  - {\nabla _X}{\nabla _Y}Z + {\nabla _Y}{\nabla _X}Z + {\nabla _{\left[ {X,Y} \right]}}Z,$$ $$R\left( {X,Y,Z,W} \right) = \left\langle {R\left( {X,Y} \right)Z,W} \right\rangle ,$$ i.e. ${R_{ijkl}} = R\left( {{e_i},{e_j},{e_k},{e_l}} \right)$. Also ${R_{jl}} = \sum\limits_i {{R_{ijil}}} $ and $s = \sum\limits_j {{R_{jj}}} $ is the components of Ricci curvature tensor and scalar curvature tensor, respectively.

The curvature operator $\widetilde R$ act on ${S^2}\left( V \right)$ induced by Riemannian curvature tensor $R$ is defined by
$$\widetilde R ({e_i} \odot {e_j}) = \sum\limits_{k,l} {{R_{kijl}}{e_k} \odot {e_l}}, $$
where ${e_i} \odot {e_j} = {e_i} \otimes {e_j} + {e_j} \otimes {e_i}$. So ${\left\{ {\frac{1}{{\sqrt 2 }}{e_i} \odot {e_j}} \right\}_{i < j}} \cup \left\{ {\frac{1}{2}{e_i} \odot {e_i}} \right\}, i, j=1,\cdots,n,$ is an orthonormal basis for ${S^2}\left( V \right)$. $\widetilde R$ induces a symmetric form ${\mathop R\limits^ \circ } $  defined by
$${\mathop R\limits^ \circ} :S_0^2\left( V \right) \times S_0^2\left( V \right) \to \mathbb{R},$$
$$ \left( {{S^\alpha },{S^\beta }} \right) \mapsto {R_{ijkl}}S_{jk}^\alpha S_{il}^\beta.$$
It can be regarded as $\widetilde R$ and  its image restrict on $S_0^2\left( V \right)$.
The operator ${\mathop R\limits^ \circ}$ is called  curvature operator of the second kind.

Let $\left\{ {{\lambda _\alpha }} \right\}_{\alpha=1}^{\frac{(n-1)(n+2)}{2}}$ be the eigenvalues of ${\mathop R\limits^ \circ}$. We always assume $\left\{ {{\lambda _\alpha }} \right\}$ be a nondecreasing sequence in this paper. For some number $\delta\geq1$,  we say the  curvature operator of the second kind ${\mathop R\limits^ \circ}$ is $\delta$-nonnegative (positive), if $\sum\limits_{\alpha {\rm{ = }}1}^{[\delta]} {{\lambda _\alpha }} +(\delta-[\delta])\lambda _{[\delta]+1} \ge 0$ $( >0 )$. If $\delta=k$ is positive integer, then ${\mathop R\limits^ \circ}$ is $k$-nonnegative (positive), i.e., $\sum\limits_{\alpha {\rm{ = }}1}^{k} {{\lambda _\alpha }} \ge 0$ $( >0 )$.

\begin{definition}[\cite{NPW}]
\label{def2.1}
Let $(V,g)$ be an $n$-dimensional Euclidean vector space, $T^{(0,k)}(V)$ is $(0,k)$-tensor space. For $S \in {S^2}\left( V \right)$ and $T \in {T^{(0,k)}}\left( V \right)$,  define the self-adjoint linear operator $S$ as
$$S:{T^{(0,k)}}(V) \to {T^{(0,k)}}(V),$$ $$(ST)({X_1},\cdots,{X_k}) = \sum\limits_{i = 1}^k {T({X_1},\cdots ,S{X_i},\cdots,{X_k})} ,$$
and ${T^{{S^2}}} \in {T^{(0,k)}}(V) \otimes {S^2}(V)$ via
$$\langle {T^{{S^2}}}({X_1},\cdots, {X_k}),S\rangle  = (ST)({X_1},\cdots,{X_k}),$$ i.e., for an orthonormal basis $\left\{ {{\widetilde S^\alpha }} \right\}$ of $S^2\left( V \right)$,
$${T^{{S^2}}} = \sum\limits_\alpha  {{\widetilde S^\alpha }T \otimes {\widetilde S^\alpha }} .$$ Similarly, define ${T^{{S_0^2}}} \in {T^{(0,k)}}(V) \otimes {S_0^2}(V)$ and for an orthonormal basis $\left\{ {{S^\alpha }} \right\}$ of $S_0^2\left( V \right)$, $${T^{S_0^2}} = \sum\limits_\alpha  {{S^\alpha }T \otimes {S^\alpha }} .$$
\end{definition}

By Definition 2.1, we have
$${T^{{S^2}}}={T^{S_0^2}} + \frac{g}{{\sqrt n }}T \otimes \frac{g}{{\sqrt n }} = {T^{S_0^2}} + \frac{k}{n}T \otimes g.$$
For a tensor ${T^{{S^2}}} \in {T^{(0,k)}}(V) \otimes {S^2}(V)$, we have
\begin{equation}
\label{2.1}
\begin{aligned}
T^{{S^2}}
&= \sum\limits_{i < j} {\frac{1}{{\sqrt 2 }}\left( {{e_i} \odot {e_j}} \right)T \otimes \frac{1}{{\sqrt 2 }}\left( {{e_i} \odot {e_j}} \right)}  + \sum\limits_i {\frac{1}{2}\left( {{e_i} \odot {e_i}} \right)T \otimes \frac{1}{2}\left( {{e_i} \odot {e_i}} \right)} \\
&=\frac{1}{4}\sum\limits_{i \ne j} {\left( {{e_i} \odot {e_j}} \right)T \otimes \left( {{e_i} \odot {e_j}} \right)}  + \frac{1}{4}\sum\limits_i {\left( {{e_i} \odot {e_i}} \right)T \otimes \left( {{e_i} \odot {e_i}} \right)}  \\
&=\frac{1}{4}\sum\limits_{i,j} {\left( {{e_i} \odot {e_j}} \right)T \otimes \left( {{e_i} \odot {e_j}} \right)} .
\end{aligned}
\end{equation}

Let $\left\{ {{\widetilde S^\alpha }} \right\}$ be the orthonormal eigenbasis of curvature operator $\widetilde R$ with the corresponding eigenvalues  $\left\{ {{\widetilde \lambda _\alpha }} \right\}$. Set $\widetilde R$ acting on $T^{{S^2}}$ as
$$\widetilde R \left({T^{{S^2}}} \right) = \sum\limits_\alpha  {{\widetilde S^\alpha }} T \otimes \widetilde R \left({\widetilde S^\alpha }\right),$$ and
\begin{equation}
\label{2.2}
\left\langle {\widetilde R \left({T^{{S^2}}}\right),{T^{{S^2}}}} \right\rangle  = {\sum\limits_\alpha  {{\widetilde \lambda _\alpha }\left| {{\widetilde S^\alpha }T} \right|} ^2}.\end{equation}

\begin{remark}
The above content mainly comes from Preliminaries in \cite{NPW}.
\end{remark}

\section{A Bochner formula for curvature operator of the second kind on Einstein manifolds}
In a Riemannian manifold, the Riemannian curvature tensor can be decomposed into irreducible components  \cite{B} as
$$R = W + \frac{1}{{n - 2}}E \circledwedge g + \frac{s}{{2n\left( {n - 1} \right)}}g \circledwedge g,$$
$${R_{ijkl}} = {W_{ijkl}}{\rm{ + }}\frac{1}{{n - 2}}\left( {{E_{ik}}{g_{jl}} + {E_{jl}}{g_{ik}} - {E_{il}}{g_{jk}} - {E_{jk}}{g_{il}}} \right) + \frac{s}{{n\left( {n - 1} \right)}}\left( {{g_{ik}}{g_{jl}} - {g_{il}}{g_{jk}}} \right).$$
Here $A\circledwedge B$ is the Kulkarni-Nomizu product of  second-order symmetric tensors $A$ and $B$.  $W$ is the Weyl curvature tensor and $E=R-\frac{s}{n}g$ is the traceless Ricci tensor. A Riemannian manifold is an Einstein manifold if and only if $E=0$.

In this section, we will prove some Lemmas and Propositions for Einstein manifolds. And some of them are also true for Riemannian manifolds with harmonic curvature.

Let $T$ be  an smooth algebraic curvature tensor. To facilitate the calculation of some quantities, we set
$${R_{tpsq}}{T_{tpkl}}{T_{sqkl}} = \alpha ,{R_{tspq}}{T_{tjpl}}{T_{sjql}} = \beta .$$
By using the first Bianchi identity, we have
$${R_{tspq}}{T_{tpkl}}{T_{sqkl}} = \frac{1}{2}\alpha,$$
since $${R_{tspq}}{T_{tpkl}}{T_{sqkl}} = \left( {{R_{tpsq}} + {R_{tqps}}} \right){T_{tpkl}}{T_{sqkl}} = \alpha  - {R_{tspq}}{T_{tpkl}}{T_{sqkl}},$$
and
$${R_{tpsq}}{T_{tkpl}}{T_{skql}}= \frac{1}{4}\alpha,$$
since
\begin{equation*}
\begin{aligned}
{R_{tpsq}}{T_{tpkl}}{T_{sqkl}}
=&{R_{tpsq}}\left({T_{tkpl}} + {T_{tlkp}}\right)\left({T_{skql}} + {T_{slkq}}\right)\\
=& {R_{tpsq}}{T_{tkpl}}{T_{skql}} + {R_{tpsq}}{T_{tkpl}}{T_{slkq}} + {R_{tpsq}}{T_{tlkp}}{T_{skql}} + {R_{tpsq}}{T_{tlkp}}{T_{slkq}}\\
=& 2{R_{tpsq}}{T_{tkpl}}{T_{skql}} - 2{R_{tpsq}}{T_{tkpl}}{T_{slqk}}\\
=& 2{R_{tpsq}}{T_{tkpl}}{T_{skql}} + 2{R_{tpqs}}{T_{tkpl}}{T_{qksl}}\\
=& 4{R_{tpsq}}{T_{tkpl}}{T_{skql}},\\
\end{aligned}
\end{equation*}
and
$${R_{pstq}}{T_{tjpl}}{T_{sjql}} = \left( {{R_{ptsq}} + {R_{pqts}}} \right){T_{tjpl}}{T_{sjql}} =  - {R_{tpsq}}{T_{tjpl}}{T_{sjql}} + \beta  =  - \frac{1}{4}\alpha  + \beta .$$

Based on the above quantities, we can get the following propositions.

\begin{proposition}
\label{pro3.1}
Let $T$ be a smooth algebraic curvature tensor on a Riemannian manifold $(M,g)$ of dimension $n\geq3$. If $T$ is harmonic $($ i.e. $T$ satisfies the second Bianchi identity and $\sum\limits_l {{T_{ijkl,l}} = 0}$ $)$, then
\begin{equation}
\label{3.1}
\begin{aligned}
\left\langle {\Delta T,T} \right\rangle
=& 2{R_{lt}}{T_{ijkl}}{T_{ijkt}}  - {R_{tskl}}{T_{ijkl}}{T_{ijts}}- 4{R_{itls}}{T_{ijlk}}{T_{tjsk}} \\
=& 2{R_{lt}}{T_{ijkl}}{T_{ijkt}} -\alpha  -4 \beta.
\end{aligned}
\end{equation}
\end{proposition}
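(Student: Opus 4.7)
The plan is a direct coordinate computation in a normal orthonormal frame at a point $p$. Starting from $\langle \Delta T, T\rangle = T_{ijkl,mm}\,T_{ijkl}$, I would apply the second Bianchi identity
$$T_{ijkl,m} + T_{ijlm,k} + T_{ijmk,l} = 0$$
to get $T_{ijkl,m} = -T_{ijlm,k} - T_{ijmk,l}$, and then differentiate once more in the $m$-direction and sum to obtain
$$T_{ijkl,mm} = -T_{ijlm,km} - T_{ijmk,lm}.$$

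The next step is to swap the order of the two covariant derivatives on the right using the Ricci identity on a $(0,4)$-tensor, each commutator producing four curvature-$\times\,T$ corrections (one per index slot of $T$). The key observation is that once $\nabla_m$ is moved to act first, the resulting expression is $\nabla_k$ (respectively $\nabla_l$) applied to the divergence $\sum_m \nabla_m T_{ijlm}$, which vanishes identically by the harmonicity hypothesis (using the skew-symmetry of $T$ in the last pair to move the divergence to the correct slot). Hence only the commutator terms survive, so that $T_{ijkl,mm}$ is reduced to a sum of eight Riemann-$\times\,T$ contractions. Contracting with $T_{ijkl}$ produces a sum of eight Riemann-$\times\,T\times T$ scalar invariants.

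Finally, I would regroup these eight scalar invariants using the algebraic symmetries of $T$ together with the identities already established in this section. The two contributions in which the commutator index lands on a slot that is itself being summed generate Ricci traces $\sum_m R_{km\cdot m}$, and combine to give $2R_{lt}\,T_{ijkl}\,T_{ijkt}$. The six remaining terms are genuine Riemann-$\times\,T\times T$ contractions; these regroup via the first Bianchi identity applied to $R$, in exactly the forms $R_{tspq}T_{tpkl}T_{sqkl}=\tfrac12\alpha$, $R_{tpsq}T_{tkpl}T_{skql}=\tfrac14\alpha$, and $R_{pstq}T_{tjpl}T_{sjql}=-\tfrac14\alpha+\beta$ derived just above the proposition, and they collapse to $-\alpha - 4\beta$. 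The main obstacle is purely the combinatorial bookkeeping: one must track signs through the Ricci identity and arrange the eight contractions into exactly the three canonical forms used in the preparatory identities, which were in fact tailored precisely for this regrouping. Once that matching is done, the formula $2R_{lt}T_{ijkl}T_{ijkt} - \alpha - 4\beta$ falls out mechanically.
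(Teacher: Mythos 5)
Your proposal is correct and follows essentially the same route as the paper: second Bianchi identity on $\nabla T$, commuting derivatives via the Ricci identity, killing the leading term with $\sum_l T_{ijkl,l}=0$, and identifying the surviving curvature contractions as the Ricci-trace term plus $-\alpha-4\beta$. The only cosmetic difference is that the paper first uses the $k\leftrightarrow l$ symmetry to reduce to $2T_{ijks,ls}T_{ijkl}$ (four commutator terms instead of eight), and the final identification with $\alpha$ and $\beta$ is a direct index relabeling rather than an appeal to the first-Bianchi identities preceding the proposition.
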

\begin{proof}By using the second Bianchi identity and Ricci identity, we have
\begin{equation*}
\begin{aligned}
  \left\langle {\Delta T,T} \right\rangle&={T_{ijkl,ss}}{T_{ijkl}} = \left( {{T_{ijks,ls}} + {T_{ijsl,ks}}} \right){T_{ijkl}} = 2{T_{ijks,ls}}{T_{ijkl}}  \\
  &=2{T_{ijkl}}\left( {{T_{ijks,sl}} + {R_{lsti}}{T_{tjks}} + {R_{lstj}}{T_{itks}} + {R_{lstk}}{T_{ijts}} + {R_{lsts}}{T_{ijkt}}} \right)  \\
  & = 4{R_{lsti}}{T_{ijkl}}{T_{tjks}} + 2{R_{lstk}}{T_{ijkl}}{T_{ijts}} + 2{R_{lt}}{T_{ijkl}}{T_{ijkt}}\\
  & = 2{R_{lt}}{T_{ijkl}}{T_{ijkt}}  - {R_{tskl}}{T_{ijkl}}{T_{ijts}}- 4{R_{itls}}{T_{ijlk}}{T_{tjsk}}\\
  &= 2{R_{lt}}{T_{ijkl}}{T_{ijkt}} -\alpha  -4 \beta.
\end{aligned}
\end{equation*}
The proof is completed.
\end{proof}

\begin{proposition}
\label{pro3.2}
Let $T$ and ${\mathop R\limits^ \circ}$ denote a smooth algebraic curvature tensor and a curvature operator of the second kind on a Riemannian manifold $(M,g)$ of dimension $n\geq3$, respectively. Then
\begin{equation}
\label{3.2}
\begin{aligned}
\left\langle {{\mathop R\limits^ \circ} \left({T^{S_0^2}}\right),{T^{S_0^2}}} \right\rangle
  =&\frac{{2n + 32}}{n}{R_{st}}{T_{sjkl}}{T_{tjkl}} - 5{R_{tpsq}}{T_{tpkl}}{T_{sqkl}}\\
   &+ 4{R_{tspq}}{T_{tjpl}}{T_{sjql}} - \frac{{16}}{{{n^2}}}s{\left| T \right|^2}\\
  =&\frac{{2n + 32}}{n}{R_{st}}{T_{sjkl}}{T_{tjkl}} - 5\alpha +4\beta - \frac{{16}}{{{n^2}}}s{\left| T \right|^2}.
\end{aligned}
\end{equation}
\end{proposition}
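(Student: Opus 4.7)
The plan is to pass from the action on $S_0^2(V)$ to the full action on $S^2(V)$, where the expansion (2.1) is available, and then correct. Using the identity $T^{S^2} = T^{S_0^2} + \tfrac{4}{n}T\otimes g$ established just after Definition 2.1, together with the self-adjointness of $\widetilde R$, one obtains
$$\langle\widetilde R(T^{S^2}), T^{S^2}\rangle = \langle\mathop R\limits^\circ(T^{S_0^2}), T^{S_0^2}\rangle + \tfrac{8}{n}\langle\widetilde R(T^{S_0^2}), T\otimes g\rangle + \tfrac{16}{n^2}\langle\widetilde R(T\otimes g), T\otimes g\rangle,$$
using that the inner product on $S_0^2(V)$ is inherited from $S^2(V)$. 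Solving for $\langle\mathop R\limits^\circ(T^{S_0^2}), T^{S_0^2}\rangle$ reduces the proof to computing the three inner products on the right.

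The two correction terms are handled by direct trace computations. Writing $g = \tfrac12\sum_i e_i\odot e_i$ and using the definition of $\widetilde R$, the tensor $\widetilde R(g)$ is a scalar multiple of $\mathrm{Ric}$, so $\langle\widetilde R(T\otimes g), T\otimes g\rangle$ is a multiple of $s|T|^2$. For the cross term, $\mathrm{tr}\,\widetilde R(S^\alpha)$ is a scalar multiple of $\langle S^\alpha, \mathrm{Ric}\rangle$ for every $S^\alpha\in S^2(V)$, so $\langle\widetilde R(T^{S_0^2}), T\otimes g\rangle$ reduces (up to sign) to $\langle ET, T\rangle$ with $E = \mathrm{Ric}-\tfrac{s}{n}g$. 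The algebraic symmetries of $T$ then collapse $\langle ET, T\rangle$ to $4R_{st}T_{sjkl}T_{tjkl}-\tfrac{4s}{n}|T|^2$, which supplies precisely the Ricci and scalar-curvature contributions in the target formula.

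The heart of the argument is the direct evaluation of $\langle\widetilde R(T^{S^2}), T^{S^2}\rangle$. From (2.1) together with $\widetilde R(e_i\odot e_j) = \sum_{k,l} R_{kijl}e_k\odot e_l$, this inner product equals $\tfrac{1}{4}\sum_{i,j,k,l}R_{kijl}\langle(e_i\odot e_j)T, (e_k\odot e_l)T\rangle$. Expanding $(e_p\odot e_q)T$ via Definition 2.1 produces eight Kronecker-delta terms (two for each of the four slots of $T$), so the pairing yields $64$ bilinear expressions in $T$, each weighted by a component of $R$. Using the algebraic symmetries $T_{ijkl} = -T_{jikl} = -T_{ijlk} = T_{klij}$, the first Bianchi identity on both $T$ and $R$, and the reductions $R_{tspq}T_{tpkl}T_{sqkl} = \tfrac12\alpha$, $R_{tpsq}T_{tkpl}T_{skql} = \tfrac14\alpha$, and $R_{pstq}T_{tjpl}T_{sjql} = -\tfrac14\alpha+\beta$ proved immediately before the proposition, every such term collapses to a multiple of $\alpha$, $\beta$, $R_{st}T_{sjkl}T_{tjkl}$, or $s|T|^2$.

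The principal obstacle is the bookkeeping in this last step: the $64$ bilinear expressions must each be matched through the correct chain of symmetries to one of the four basic contractions, and the resulting coefficients must combine with the correction terms above to produce precisely $(2n+32)/n$, $-5$, $+4$, and $-16/n^2$. The procedure is mechanical but intricate, and tracking signs through the repeated applications of antisymmetry and Bianchi is where the bulk of the work lies.
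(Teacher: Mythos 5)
Your proposal is correct and follows essentially the same route as the paper: evaluate $\langle\widetilde R(T^{S^2}),T^{S^2}\rangle$ by expanding (2.1) into the Kronecker-delta terms and collapsing them via the reductions $R_{tspq}T_{tpkl}T_{sqkl}=\tfrac12\alpha$, $R_{tpsq}T_{tkpl}T_{skql}=\tfrac14\alpha$, $R_{pstq}T_{tjpl}T_{sjql}=-\tfrac14\alpha+\beta$ to get $2R_{st}T_{sjkl}T_{tjkl}-5\alpha+4\beta$, then correct by $T^{S^2}=T^{S_0^2}+\tfrac4n T\otimes g$. The only (cosmetic) difference is that you package the cross term as $-\tfrac8n\langle ET,T\rangle$ while the paper writes it as $\tfrac8n\langle T^{S^2},T\otimes \mathrm{Ric}\rangle$; both yield $\tfrac{32}{n}R_{st}T_{sjkl}T_{tjkl}-\tfrac{32}{n^2}s|T|^2$ and hence the stated coefficients.
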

\begin{proof}
First, by using \eqref{2.1} we obtain
\begin{equation*}
\begin{aligned}
\left\langle {\widetilde R\left( {{T^{{S^2}}}}\right),{T^{{S^2}}}} \right\rangle
=&\frac{1}{{16}}\sum\limits_{s,t,p,q} {\left\langle {\left( {{e^s} \odot {e^t}}\right)T \otimes \widetilde R \left( {{e^s} \odot {e^t}} \right),\left( {{e^p} \odot {e^q}}\right)T \otimes\left( {{e^p} \odot {e^q}}\right)} \right\rangle } \\
  =&\frac{1}{8}\left( {{R_{pstq}} + {R_{qstp}}} \right)\left( {{\delta _{is}}{T_{tjkl}} + \cdots + {\delta _{ls}}{T_{ijkt}} + {\delta _{it}}{T_{sjkl}} + \cdots + {T_{ijks}}{\delta _{lt}}} \right)\\
  &\times\left( {{\delta _{ip}}{T_{qjkl}} + \cdots + {\delta _{lp}}{T_{ijkq}} + {\delta _{iq}}{T_{pjkl}} + \cdots + {\delta _{lq}}{T_{ijkp}}} \right)\\
  =&\left( {{R_{pstq}} + {R_{qstp}}} \right) {\delta _{is}}T_{tjkl}
   \left( {{\delta _{ip}}{T_{qjkl}} + \cdots + {\delta _{lp}}{T_{ijkq}} + {\delta _{iq}}{T_{pjkl}} + \cdots + {\delta _{lq}}{T_{ijkp}}} \right)\\
  =&\left({{T_{tpkl}}{T_{sqkl}} + {T_{tjpl}}{T_{sjql}}}
   +{{T_{tjkp}}{T_{sjkq}} + {T_{tqkl}}{T_{spkl}} + {T_{tjql}}{T_{sjpl}} + {T_{tjkq}}{T_{sjkp}}} \right)\\
  &\times\left( {{R_{pstq}} + {R_{ptsq}}} \right)+ 2{R_{ps}}{T_{pjkl}}{T_{sjkl}}\\
  =&{\rm{2}}{R_{st}}{T_{sjkl}}{T_{tjkl}} + 2\left( {{R_{pstq}} - {R_{tpsq}}} \right)\left( {{T_{tpkl}}{T_{sqkl}} + 2{T_{tjpl}}{T_{sjql}}} \right)\\
  =&{\rm{2}}{R_{st}}{T_{sjkl}}{T_{tjkl}} + 2\left( -{R_{tspq}}{T_{tpkl}}{T_{sqkl}} - {R_{tpsq}}{T_{tpkl}}{T_{sqkl}}
   + 2{R_{pstq}}{T_{tjpl}}{T_{sjql}} - 2{R_{tpsq}}{T_{tjpl}}{T_{sjql}} \right) \\
  =&2{R_{st}}{T_{sjkl}}{T_{tjkl}} + 2\left( { - \frac{1}{2}\alpha  - \alpha  + 2\left( { - \frac{1}{4}\alpha  + \beta } \right) -  {\frac{1}{2}\alpha } } \right)\\
  =&2{R_{st}}{T_{sjkl}}{T_{tjkl}} - 5\alpha  + 4\beta .
\end{aligned}
\end{equation*}
Second, by ${T^{{S^2}}} = {T^{S_0^2}} + \frac{k}{n}T \otimes g$, we have from the above
\begin{equation*}
\begin{aligned}
\left\langle {{\mathop R\limits^ \circ}\left ({T^{S_0^2}}\right),{T^{S_0^2}}} \right\rangle
=&\left\langle {\widetilde R \left({T^{{S^2}}} - \frac{4}{n}T \otimes g\right),{T^{{S^2}}} - \frac{4}{n}T \otimes g} \right\rangle\\
=&\left\langle {\widetilde R \left({T^{{S^2}}}\right),{T^{{S^2}}}} \right\rangle  + \frac{8}{n}\left\langle {{T^{{S^2}}},T \otimes Ric} \right\rangle  - \frac{{16}}{{{n^2}}}s{\left| T \right|^2}\\
=&\left\langle {\widetilde R \left({T^{{S^2}}}\right),{T^{{S^2}}}} \right\rangle  + \frac{2}{n}\sum\limits_{s,t} {\left\langle {\left({e^s} \odot {e^t}\right)T \otimes \left({e^s} \odot {e^t}\right),T \otimes Ric} \right\rangle }  - \frac{{16}}{{{n^2}}}s{\left| T \right|^2}\\
=&\left\langle {\widetilde R \left({T^{{S^2}}}\right),{T^{{S^2}}}} \right\rangle  + \frac{2}{n}\sum\limits_{s,t} {\left\langle {\left({e^s} \odot {e^t}\right)T,T} \right\rangle \left\langle {{e^s} \odot {e^t},Ric} \right\rangle }  - \frac{{16}}{{{n^2}}}s{\left| T \right|^2}\\
=&\left\langle {\widetilde R \left({T^{{S^2}}}\right),{T^{{S^2}}}} \right\rangle  + \frac{2}{n}\sum\limits_{s,t} {8{T_{sjkl}}{T_{tjkl}}\left( {{R_{st}} + {R_{ts}}} \right)}   - \frac{{16}}{{{n^2}}}s{\left| T \right|^2}\\
 =&\frac{{2n + 32}}{n}{R_{st}}{T_{sjkl}}{T_{tjkl}} - 5\alpha +4\beta - \frac{{16}}{{{n^2}}}s{\left| T \right|^2}.
\end{aligned}
\end{equation*}
The proof is completed.
\end{proof}

\begin{lemma}
\label{pro3.3}
Let $(M,g)$ be an $n$-dimensional Einstein manifold. Then the  curvature operator of the second kind ${\mathop R\limits^ \circ} $ on $M$ has
$tr{\mathop R\limits^ \circ} = \frac{{n + 2}}{{2n}}s,$ and \begin{equation*}
\begin{aligned}
tr(({\mathop R\limits^ {\circ}})^2) =& \frac{3}{4}{\left| R \right|^2} - \frac{s^2}{n^2},\\
tr(({\mathop R\limits^ \circ})^3) =& -\beta+\frac18\alpha+\frac{s^3}{n^3}.
\end{aligned}
\end{equation*}
\end{lemma}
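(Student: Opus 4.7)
The strategy is to pass from $\mathring R$ to the operator $\widetilde R$ acting on the larger space $S^2(V)=S_0^2(V)\oplus\mathbb R g$. The key remark is that, under the Einstein hypothesis, $\widetilde R$ preserves this orthogonal decomposition, so its spectrum consists of the spectrum of $\mathring R$ together with one extra eigenvalue coming from the $\mathbb R g$ summand; every trace $\operatorname{tr}(\mathring R^{m})$ is then the corresponding trace $\operatorname{tr}(\widetilde R^{m})$ minus a single explicit correction. The three formulas in the lemma thus reduce to computing $\operatorname{tr}\widetilde R$, $\operatorname{tr}(\widetilde R^{2})$ and $\operatorname{tr}(\widetilde R^{3})$ as curvature invariants.

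First I would compute $\widetilde R(g)$. Writing $g=\tfrac12\sum_i e_i\odot e_i$ and using the antisymmetry $R_{kiil}=-R_{ikil}$ together with $R_{kl}=\sum_i R_{ikil}$, a short calculation gives
\[\widetilde R(g)=\tfrac12\sum_{i,k,l} R_{kiil}\,e_k\odot e_l=-\mathrm{Ric}.\]
Under the Einstein condition $\mathrm{Ric}=(s/n)g$, this becomes $\widetilde R(g)=-(s/n)g$, so $g$ is an eigenvector of $\widetilde R$ with eigenvalue $-s/n$. Since $\widetilde R$ is self-adjoint, it also preserves $(\mathbb R g)^{\perp}=S_0^2(V)$, on which it coincides with $\mathring R$. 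Consequently
\[\operatorname{tr}(\mathring R^{m})=\operatorname{tr}(\widetilde R^{m})-(-s/n)^{m},\qquad m=1,2,3.\]

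Next I would compute the three traces of $\widetilde R$ in the orthonormal basis $\{c_{ij}^{-1/2} e_i\odot e_j\}_{i\le j}$ with $c_{ij}=2$ for $i\ne j$ and $c_{ii}=4$, using $\langle e_k\odot e_l,e_i\odot e_j\rangle=2(\delta_{ki}\delta_{lj}+\delta_{kj}\delta_{li})$. A direct calculation yields $\langle\widetilde R(e_i\odot e_j),e_i\odot e_j\rangle=2R_{ijij}$, hence $\operatorname{tr}\widetilde R=\sum_{i<j}R_{ijij}=s/2$, which combined with Step~1 gives the first identity. The analogous expansion for $\widetilde R^{2}$ reduces $\operatorname{tr}(\widetilde R^{2})$ to $\tfrac12|R|^{2}+\tfrac12\sum R_{ijkl}R_{ikjl}$; a short argument using pair symmetry and the first Bianchi identity $R_{ijkl}+R_{iklj}+R_{iljk}=0$ produces the auxiliary identity $\sum R_{ijkl}R_{ikjl}=\tfrac12|R|^{2}$, so $\operatorname{tr}(\widetilde R^{2})=\tfrac34|R|^{2}$, and the second identity follows from Step~1.

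The cubic trace is the main obstacle. Iterating $\widetilde R(e_a\odot e_b)=\sum R_{pabq}\,e_p\odot e_q$ three times and contracting against $e_i\odot e_j$ produces, after passing from $i\le j$ to unrestricted sums by the $(i,j)$-symmetry of the integrand, two sextuple contractions of three copies of $R$ of the schematic form $\sum R_{kijl}R_{pklq}R_{ipqj}$ and $\sum R_{kijl}R_{pklq}R_{jpqi}$. Reorganizing these by repeated applications of pair symmetry, the antisymmetries of $R$, and the first Bianchi identity collapses them into a combination of the two invariants $\alpha=R_{tpsq}R_{tpkl}R_{sqkl}$ and $\beta=R_{tspq}R_{tjpl}R_{sjql}$; the algebraic identities recorded just before Proposition~3.1 ($R_{tspq}T_{tpkl}T_{sqkl}=\tfrac12\alpha$, $R_{tpsq}T_{tkpl}T_{skql}=\tfrac14\alpha$, and $R_{pstq}T_{tjpl}T_{sjql}=-\tfrac14\alpha+\beta$, now applied with $T=R$) are precisely the ingredients that force the final coefficients to be $\tfrac18$ and $-1$. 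Combined with Step~1 this yields $\operatorname{tr}(\mathring R^{3})=\tfrac18\alpha-\beta+s^{3}/n^{3}$. The principal difficulty is exactly this bookkeeping: checking that each of the eight terms produced by the triple product expansion collapses to $\alpha$ or $\beta$ with the correct sign and that the coefficients combine to $\tfrac18$ and $-1$; the linear and quadratic steps are straightforward once the block-diagonalization of $\widetilde R$ is established.
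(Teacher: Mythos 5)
Your proposal is correct and follows essentially the same route as the paper: block-diagonalize $\widetilde R$ over $S^2=S_0^2\oplus\mathbb{R}g$ using the Einstein condition, compute $\operatorname{tr}(\widetilde R^{m})$ as curvature contractions, and subtract the contribution of the $g$-direction (your observation that $g$ is an eigenvector with eigenvalue $-s/n$, so the correction is $(-s/n)^m$, is exactly what the paper obtains by computing $\langle\widetilde R^{m}(g/\sqrt n),g/\sqrt n\rangle$ directly). The cubic trace is only sketched in your write-up, but the paper supplies no more detail there either, and your identification of the final coefficients $\tfrac18$ and $-1$ matches its stated formula $\operatorname{tr}(\widetilde R^{3})=-R_{isjt}R_{kslt}R_{ikjl}+\tfrac18 R_{ijkl}R_{ijst}R_{klst}$.
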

\begin{proof}
For a self-adjoint operator $A$ on $S^2(V)$, we have
\begin{equation*}
\begin{aligned}
trA
 =& \sum\limits_{i < j} {\left\langle {A(\frac{1}{{\sqrt 2 }}{e_i} \odot {e_j}),\frac{1}{{\sqrt 2 }}{e_i} \odot {e_j}} \right\rangle }  + \sum\limits_i {\left\langle {A(\frac{1}{2}{e_i} \odot {e_i}),\frac{1}{2}{e_i} \odot {e_i}} \right\rangle }\\
 =& \frac{1}{4}\sum\limits_{i \ne j} {\left\langle {A({e_i} \odot {e_j}),{e_i} \odot {e_j}} \right\rangle }  + \frac{1}{4}\sum\limits_i {\left\langle {A({e_i} \odot {e_i}),{e_i} \odot {e_i}} \right\rangle } \\
 =& \frac{1}{4}\sum\limits_{i,j} {\left\langle {A({e_i} \odot {e_j}),{e_i} \odot {e_j}} \right\rangle }\\
 =& \frac{1}{2}\sum\limits_{i,j} {{A_{ijij}}},
\end{aligned}
\end{equation*}
and
$$\left\langle {A(\frac{g}{{\sqrt n }}),\frac{g}{{\sqrt n }}} \right\rangle  = \frac{1}{{4n}}\sum\limits_{i,j} {\left\langle {A({e_i} \odot {e_i}),{e_j} \odot {e_j}} \right\rangle }  = -\frac{1}{n}\sum\limits_{i,j} {{A_{ijij}}}.$$
Thus first we have
$$tr{\mathop R\limits^ \circ} = tr\widetilde R  - \left\langle {\widetilde R ( {\frac{g}{{\sqrt n }}} ),\frac{g}{{\sqrt n }}} \right\rangle  = \frac{{n + 2}}{{2n}}s.$$
Next, we get
\begin{equation*}
\begin{aligned}
tr({\widetilde R ^2}) =& \frac{1}{4}\sum\limits_{i,j} {\left\langle {{{\widetilde R }^2}({e_i} \odot {e_j}),{e_i} \odot {e_j}} \right\rangle }\\
 =& \frac{1}{4}\sum\limits_{i,j,k,l} {\left\langle {\widetilde R ({R_{kijl}}({e_k} \odot {e_l})),{e_i} \odot {e_j}} \right\rangle }  \\
 =& \frac{1}{4}\sum\limits_{i,j,k,l,s,t} {\left\langle {{R_{sklt}}{R_{kijl}}({e_s} \odot {e_t}),{e_i} \odot {e_j}} \right\rangle } \\
 =& \frac{1}{2}\sum\limits_{i,j,k,l} {({R_{iklj}} + {R_{jkli}}){R_{kijl}}} \\
 =& \frac{3}{4}{\left| R \right|^2},\\
\end{aligned}
\end{equation*}
and
\begin{equation*}
\begin{aligned}
\left\langle {{{\widetilde R }^2}(\frac{g}{{\sqrt n }}),\frac{g}{{\sqrt n }}} \right\rangle =& \left\langle {{{\widetilde R }^2}(\frac{1}{{2\sqrt n }}\sum\limits_i {({e_i} \odot {e_i})} ),\frac{1}{{2\sqrt n }}\sum\limits_j {({e_j} \odot {e_j})} } \right\rangle\\
=&\frac{1}{{4n}}\sum\limits_{i,j,k,l,s,t} {\left\langle {{R_{sklt}}{R_{kiil}}{e_s} \odot {e_t},{e_j} \odot {e_j}} \right\rangle } \\
=&\frac{1}{n}{\left| {Ric} \right|^2}\\
=&\frac{s^2}{n^2}.
\end{aligned}
\end{equation*}
For Einstein manifolds, the $S_0^2(V)$ is the invariant subspace of $\widetilde R$, and we have
$$tr(({\mathop R\limits^ {\circ}})^2) = tr({\widetilde R ^2}) - \left\langle {{{\widetilde R }^2}(\frac{g}{{\sqrt n }}),\frac{g}{{\sqrt n }}} \right\rangle =\frac{3}{4}{\left| R \right|^2} - \frac{s^2}{n^2}.$$
Similarly we get
$$tr({\widetilde R ^3}) =  - {R_{isjt}}{R_{kslt}}{R_{ikjl}} + \frac{1}{8}{R_{ijkl}}{R_{ijst}}{R_{klst}},$$
$$\left\langle {{{\widetilde R }^3}(\frac{g}{{\sqrt n }}),\frac{g}{{\sqrt n }}} \right\rangle=\frac{1}{n}{R_{st}}{R_{kl}}{R_{sklt}}= -\frac{s^3}{n^3} ,$$
\begin{equation*}
\begin{aligned}tr(({\mathop R\limits^ \circ})^3) =& - {R_{isjt}}{R_{kslt}}{R_{ikjl}} + \frac{1}{8}{R_{ijkl}}{R_{ijst}}{R_{klst}}  +\frac{s^3}{n^3}\\
=&-\beta+\frac18\alpha+\frac{s^3}{n^3}.\end{aligned}
\end{equation*}
The proof is completed.
\end{proof}

\begin{proposition}
\label{lem3.4}
Let $(M,g)$ be an $n$-dimensional Einstein manifold. If $\left\{ {{S^\alpha }} \right\}$ is the orthonormal eigenbasis for the curvature operator of the second kind ${\mathop R\limits^ \circ}$ on $(M,g)$ with the corresponding eigenvalues $\left\{ {{\lambda _\alpha }} \right\}$, then
\begin{equation}
\label{3.3}
\begin{aligned}
3\langle \Delta R,R\rangle
  =&\sum {{\lambda _\alpha }{{\left| {{S^\alpha }W} \right|}^2}}+8\left(\frac{-n^3+6n^2+12n-8}{3n^4(n-1)^2} \right){s^3}\\
   &+8\left(\frac{2n^2-22n+8}{3n^2(n-1)} \right)s\sum {\lambda _\alpha ^2}  + 16\sum {\lambda _\alpha ^3}.
\end{aligned}
\end{equation}
\end{proposition}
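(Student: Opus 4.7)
The plan is to feed $T=R$ into Propositions \ref{pro3.1} and \ref{pro3.2}, use the cubic trace identity from Lemma \ref{pro3.3} to isolate $\sum\lambda_{\alpha}^{3}$, and then decompose $|S^{\alpha}R|^{2}$ using the Einstein splitting $R=W+\tfrac{s}{2n(n-1)}\,g\circledwedge g$. Since $E=0$ makes $s$ constant and $R$ harmonic (via the twice-contracted second Bianchi identity), substituting $R_{st}=\tfrac{s}{n}\delta_{st}$ into those propositions yields
\begin{align*}
\langle\Delta R,R\rangle &= \tfrac{2s}{n}|R|^{2}-\alpha-4\beta,\\
\sum_{\alpha}\lambda_{\alpha}|S^{\alpha}R|^{2} &= \tfrac{2s(n+8)}{n^{2}}|R|^{2}-5\alpha+4\beta.
\end{align*}
The linear combination $3\langle\Delta R,R\rangle-\sum_{\alpha}\lambda_{\alpha}|S^{\alpha}R|^{2}$ collapses to $\tfrac{4(n-4)s}{n^{2}}|R|^{2}+2\alpha-16\beta$, and Lemma \ref{pro3.3} converts $2\alpha-16\beta$ into $16\sum_{\alpha}\lambda_{\alpha}^{3}-\tfrac{16s^{3}}{n^{3}}$, producing the required $16\sum\lambda_{\alpha}^{3}$ term.

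Next I would re-express $\sum_{\alpha}\lambda_{\alpha}|S^{\alpha}R|^{2}$ in terms of $\sum_{\alpha}\lambda_{\alpha}|S^{\alpha}W|^{2}$. The Einstein splitting gives
$$|S^{\alpha}R|^{2}=|S^{\alpha}W|^{2}+\tfrac{s}{n(n-1)}\langle S^{\alpha}W,S^{\alpha}(g\circledwedge g)\rangle+\tfrac{s^{2}}{4n^{2}(n-1)^{2}}|S^{\alpha}(g\circledwedge g)|^{2}.$$
A direct unpacking of Definition \ref{def2.1} shows $S^{\alpha}(g\circledwedge g)=4\,S^{\alpha}\circledwedge g$, whence $|S^{\alpha}(g\circledwedge g)|^{2}=16|S^{\alpha}\circledwedge g|^{2}=64(n-2)$ for a unit trace-free $S^{\alpha}$, and therefore $\sum_{\alpha}\lambda_{\alpha}|S^{\alpha}(g\circledwedge g)|^{2}=64(n-2)\operatorname{tr}\mathring{R}=\tfrac{32(n^{2}-4)s}{n}$ by Lemma \ref{pro3.3}.

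For the cross term I would polarize Proposition \ref{pro3.2} in its two curvature-type arguments; the forms $\alpha(T_{1},T_{2})$ and $\beta(T_{1},T_{2})$ are symmetric under $T_{1}\leftrightarrow T_{2}$ by pair symmetry of $R$, so the polarized identity is valid. With $T_{1}=W$, $T_{2}=g\circledwedge g$, every Weyl trace that appears vanishes, so $R_{st}W_{sjkl}(g\circledwedge g)_{tjkl}=0$ and $\langle W,g\circledwedge g\rangle=0$, while $\alpha(W,g\circledwedge g)=4\langle R,W\rangle=4|W|^{2}$. The delicate piece is $\beta(W,g\circledwedge g)$: after the $\delta$-contractions it reduces to $-2\sum R_{abcd}W_{adcb}$, and the Einstein splitting of $R$ together with Weyl-tracelessness removes the $g\circledwedge g$ contribution; the residual $\sum W_{abcd}W_{adcb}$ is identified via the algebraic-curvature identity $\sum T_{ijkl}T_{ilkj}=\tfrac{1}{2}|T|^{2}$ (a consequence of the first Bianchi identity together with pair exchange) as $\tfrac{1}{2}|W|^{2}$, giving $\beta(W,g\circledwedge g)=-|W|^{2}$ and $\sum_{\alpha}\lambda_{\alpha}\langle S^{\alpha}W,S^{\alpha}(g\circledwedge g)\rangle=-24|W|^{2}$.

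Finally I would substitute $|W|^{2}=|R|^{2}-\tfrac{2s^{2}}{n(n-1)}$ and $|R|^{2}=\tfrac{4}{3}\sum_{\alpha}\lambda_{\alpha}^{2}+\tfrac{4s^{2}}{3n^{2}}$ (both from Lemma \ref{pro3.3}) into the assembled identity, put all scalar-curvature contributions over the common denominator $3n^{4}(n-1)^{2}$, and verify that the coefficient of $s\sum_{\alpha}\lambda_{\alpha}^{2}$ simplifies to $\tfrac{8(2n^{2}-22n+8)}{3n^{2}(n-1)}$ and the coefficient of $s^{3}$ to $\tfrac{8(-n^{3}+6n^{2}+12n-8)}{3n^{4}(n-1)^{2}}$, matching the target identity. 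The main obstacle is the cross-term computation: establishing the correct bilinear extension of Proposition \ref{pro3.2} and using Bianchi to pin down $\sum W_{abcd}W_{adcb}=\tfrac{1}{2}|W|^{2}$ are the arithmetic pinch-points that fix the precise coefficient structure.
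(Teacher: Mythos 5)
Your proposal is correct, and I checked the assembly: with $\sum\lambda_\alpha|S^\alpha(g\circledwedge g)|^2=\frac{32(n^2-4)s}{n}$ and cross term $-24|W|^2$, the coefficients of $s\sum\lambda_\alpha^2$ and $s^3$ do reduce to $\frac{16(n^2-11n+4)}{3n^2(n-1)}$ and $\frac{8(-n^3+6n^2+12n-8)}{3n^4(n-1)^2}$. The skeleton is exactly the paper's: feed $T=R$ into Propositions \ref{pro3.1} and \ref{pro3.2}, convert $2\alpha-16\beta$ into $16\sum\lambda_\alpha^3-\frac{16s^3}{n^3}$ via Lemma \ref{pro3.3}, and split $R=W+\frac{s}{2n(n-1)}g\circledwedge g$ inside $\sum\lambda_\alpha|S^\alpha R|^2$. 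The one genuine divergence is the cross term $\sum\lambda_\alpha\langle S^\alpha W,S^\alpha(g\circledwedge g)\rangle$: the paper moves one $S^\alpha$ across the inner product, expands $S^\alpha(S^\alpha(g\circledwedge g))$ as a Kulkarni--Nomizu expression, and evaluates in the eigenbasis of $S^\alpha$ to obtain $-32\sum\lambda_\alpha\omega_\alpha$, where $\omega_\alpha=\lambda_\alpha-\frac{s}{n(n-1)}$ are the eigenvalues of ${\mathop W\limits^ \circ}$ (this uses that ${\mathop R\limits^ \circ}$ and ${\mathop W\limits^ \circ}$ share an eigenbasis on Einstein manifolds); you instead polarize Proposition \ref{pro3.2} and compute $\alpha(W,g\circledwedge g)=4|W|^2$, $\beta(W,g\circledwedge g)=-|W|^2$ to get $-24|W|^2$. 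The two agree because $\sum\lambda_\alpha\omega_\alpha=tr\bigl(({\mathop W\limits^ \circ})^2\bigr)=\frac34|W|^2$. Your route is a legitimate alternative and has the merit of not needing the shared eigenbasis, but be careful to justify the polarization the right way: do not re-run the Bianchi relabelings of the paper's proof with two distinct tensors (several of those index swaps exploit that the two $T$-factors are identical); instead argue that a quadratic identity forces the corresponding identity for the symmetric bilinear extensions, and then check, as you indicate, that $\alpha(T_1,T_2)$ and $\beta(T_1,T_2)$ are already symmetric in $T_1,T_2$ by the pair symmetry of $R$, so no extra symmetrization is needed. With that point made explicit, the argument is complete.
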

\begin{proof}
By Prositions \ref{pro3.1}  and \eqref{3.2}, and Lemma \ref{pro3.3}, we have
\begin{equation*}
\begin{aligned}\left\langle {\Delta R,R} \right\rangle  = \frac{2}{n}s{\left| R \right|^2} - \frac{3}{2}\alpha  + 4tr\left( {({\mathop {{R}}\limits^ \circ})^3  } \right) - \frac{4}{{{n^3}}}{s^3},\\
\left\langle {{\mathop R\limits^ \circ} ({R^{S_0^2}}),{R^{S_0^2}}} \right\rangle = \frac{{2n + 16}}{{{n^2}}}s{\left| R \right|^2} - \frac{9}{2}\alpha  - 4tr\left( {{({\mathop R\limits^ \circ  })^3}} \right) + \frac{4}{{{n^3}}}{s^3}.\end{aligned}
\end{equation*}
Combining the above two equations, we get
\begin{equation}3\left\langle {\Delta R,R} \right\rangle  = \left\langle {{\mathop R\limits^ \circ} ({R^{S_0^2}}),{R^{S_0^2}}} \right\rangle  + \frac{{4n - 16}}{{{n^2}}}s{\left| R \right|^2} + 16\sum {{\lambda _\alpha^3 }}  - \frac{{16}}{{{n^3}}}{s^3}.\end{equation}

 ${\mathop W\limits^ \circ}$ is the curvature operator induced by Weyl curvature tensor $W$, acting on $S_0^2(V)$ just like ${\mathop R\limits^ \circ}$. Since $(M,g)$ is an Einstein manifold, we choose $\left\{ {{S^\alpha }} \right\}$ which is an orthonormal eigenbasis for ${\mathop R\limits^ \circ}$  with  corresponding eigenvalues  $\left\{ \lambda _\alpha\right\}$. Thus $\left\{ {{S^\alpha }} \right\}$ is also orthonormal eigenbasis for ${\mathop W\limits^ \circ}$ with  corresponding eigenvalues  $\left\{ \omega _\alpha\right\}$ satisfying $\omega _\alpha=\lambda _\alpha-\frac{s}{n(n-1)}$ and $\sum\limits_\alpha  {{\omega _\alpha }}  = 0$.
Now we compute
\begin{equation}
\begin{aligned}
 \langle {{\mathop R\limits^ \circ}}  ({R^{S_0^2}}),{R^{S_0^2}}\rangle
  =&\sum {{\lambda _\alpha }{{\left| {{S^\alpha }R} \right|}^2}}  =\sum {{\lambda _\alpha }{{\left| {{S^\alpha }(W + \frac{s}{{2n(n - 1)}}g \circledwedge g)} \right|}^2}}\\
  =&\sum {{\lambda _\alpha }{{\left| {{S^\alpha }W} \right|}^2}}  +  \frac{{{s^2}}}{{4{n^2}{{(n - 1)}^2}}}\sum {{\lambda _\alpha }}{\left| {{S^\alpha }(g \circledwedge g)} \right|^2}\\
   &+ \frac{s}{{n(n - 1)}}\sum {{\lambda _\alpha }\left\langle {{S^\alpha }W,{S^\alpha }(g \circledwedge g)} \right\rangle }\\
   =&\sum {{\lambda _\alpha }{{\left| {{S^\alpha }W} \right|}^2}}  +  \frac{{{4s^2}}}{{{n^2}{{(n - 1)}^2}}}\sum {{\lambda _\alpha }}{\left| {{S^\alpha }\circledwedge g} \right|^2}\\
   &+ \frac{s}{{n(n - 1)}}\sum {{\lambda _\alpha }\left\langle {W,{S^\alpha }({S^\alpha }(g \circledwedge g))} \right\rangle }\\
    =&\sum {{\lambda _\alpha }{{\left| {{S^\alpha }W} \right|}^2}}  +  \frac{{{16s^2}}}{{{n^2}{{(n - 1)}^2}}}\sum {{\lambda _\alpha }}((n - 2){\left| {{S^\alpha }} \right|^2} + {(tr{S^\alpha })^2})\\
   &+ \frac{8s}{{n(n - 1)}}\sum {\lambda _\alpha }\langle W,({S^\alpha } \circ {S^\alpha }) \circledwedge g + {S^\alpha } \circledwedge {S^\alpha }\rangle\\
   =&\sum {{\lambda _\alpha }{{\left| {{S^\alpha }W} \right|}^2}}  +  \frac{{{16(n-2)s^2}}}{{{n^2}{{(n - 1)}^2}}}\sum {{\lambda _\alpha }}
   + \frac{8s}{{n(n - 1)}}\sum {\lambda _\alpha }\langle W, {S^\alpha } \circledwedge {S^\alpha }\rangle\\
   =&\sum {{\lambda _\alpha }{{\left| {{S^\alpha }W} \right|}^2}}  +  \frac{{{16(n-2)s^2}}}{{{n^2}{{(n - 1)}^2}}}\sum {{\lambda _\alpha }}
   + \frac{8s}{{n(n - 1)}}\sum {\lambda _\alpha }{W_{ijkl}}{({S^\alpha } \circledwedge {S^\alpha })_{ijkl}}\\
   =&\sum {{\lambda _\alpha }{{\left| {{S^\alpha }W} \right|}^2}}  +  \frac{{{16(n-2)s^2}}}{{{n^2}{{(n - 1)}^2}}}\sum {{\lambda _\alpha }}
   + \frac{16s}{{n(n - 1)}}\sum {\lambda _\alpha }{W_{ijkl}}({S_{ik}^\alpha }{S_{jl}^\alpha } - {S}_{il}^\alpha {S_{jk}^\alpha })\\
   =&\sum {{\lambda _\alpha }{{\left| {{S^\alpha }W} \right|}^2}}  +  \frac{{{16(n-2)s^2}}}{{{n^2}{{(n - 1)}^2}}}\sum {{\lambda _\alpha }}
   + \frac{16s}{{n(n - 1)}}\sum {\lambda _\alpha }({W_{ijij}}{S_{ii}^\alpha }{S_{jj}^\alpha } - {W_{ijji}}{S_{ii}^\alpha }{S_{jj}^\alpha })\\
   =&\sum {{\lambda _\alpha }{{\left| {{S^\alpha }W} \right|}^2}}  +  \frac{{{16(n-2)s^2}}}{{{n^2}{{(n - 1)}^2}}}\sum {{\lambda _\alpha }}
   + \frac{32s}{{n(n - 1)}}\sum {\lambda _\alpha }{W_{ijij}}{S_{ii}^\alpha }{S_{jj}^\alpha },\\
\end{aligned}
\end{equation}
where we choose the orthonormal eigenbasis $\left\{ {{e_i}} \right\}$ of ${S^\alpha }$.
On the other hand, we have
$${\omega _\alpha }{S_{ii}^\alpha } = {({\mathop W\limits^ \circ}({S^\alpha }))_{ii}} = \sum\limits_j {{W_{jiij}}{S_{jj}^\alpha }} ,$$
and
\begin{equation}
\sum\limits_{i,j} {{W_{ijij}}S_{ii}^\alpha {S_{jj}^\alpha }} =  - \sum\limits_i {{\omega _\alpha }{S}_{ii}^\alpha {S_{ii}^\alpha }}  =  - {\omega _\alpha }{\left| {{S^\alpha }} \right|^2} =  - {\omega _\alpha }.
\end{equation}
From (3.5) and (3.6), we get
\begin{equation*}
\begin{aligned}
\langle {{\mathop R\limits^ \circ}}  ({R^{S_0^2}}),{R^{S_0^2}}\rangle
  =&\sum {{\lambda _\alpha }{{\left| {{S^\alpha }W} \right|}^2}}+\frac{{16(n - 2){s^2}}}{{{n^2}{{(n - 1)}^2}}}\sum {{\lambda _\alpha }} - \frac{{32s}}{{n(n - 1)}}\sum {{\lambda _\alpha \omega _\alpha}}\\
  =&\sum {{\lambda _\alpha }{{\left| {{S^\alpha }W} \right|}^2}}+\frac{{16(n - 2){s^2}}}{{{n^2}{{(n - 1)}^2}}}\sum{{\lambda _\alpha }}\\
   & - \frac{{32s}}{{n(n - 1)}}\sum {{\lambda _\alpha^2}}+\frac{{32s^2}}{{n^2(n - 1)^2}}\sum {{\lambda _\alpha}}\\
  =&\sum {{\lambda _\alpha }{{\left| {{S^\alpha }W} \right|}^2}}+\frac{{16n{s^2}}}{{{n^2}{{(n - 1)}^2}}}\sum{{\lambda _\alpha }}- \frac{{32s}}{{n(n - 1)}}\sum {{\lambda _\alpha^2}}.
\end{aligned}
\end{equation*}
Combing the above with (3.4), we obtain from proposition \ref {pro3.3}
\begin{equation*}
\begin{aligned}
3\langle \Delta R,R\rangle = &\left\langle {{\mathop R\limits^ \circ} ({R^{S_0^2}}),{R^{S_0^2}}} \right\rangle  + \frac{{16(n - 4)}}{{{3n^2}}}s{\left(tr({(\mathop R\limits^ \circ)^2})+\frac{s^2}{n^2} \right)} + 16\sum {{\lambda _\alpha^3 }}  - \frac{{16}}{{{n^3}}}{s^3}\\
  =&\sum {{\lambda _\alpha }{{\left| {{S^\alpha }W} \right|}^2}}+8\left( {\frac{{n+2}}{{{n^2}{{(n - 1)}^2}}} + \frac{{2(n - 4)}}{{3{n^4}}} - \frac{2}{{{n^3}}}} \right){s^3}\\
   &+8\left( { - \frac{4}{{n(n - 1)}} + \frac{{2(n - 4)}}{{3{n^2}}}} \right)s\sum {\lambda _\alpha ^2}  + 16\sum {\lambda _\alpha ^3} \\
  =&\sum {{\lambda _\alpha }{{\left| {{S^\alpha }W} \right|}^2}}+8\left(\frac{-n^3+6n^2+12n-8}{3n^4(n-1)^2} \right){s^3}\\
   &+8\left(\frac{2n^2-22n+8}{3n^2(n-1)} \right)s\sum {\lambda _\alpha ^2}  + 16\sum {\lambda _\alpha ^3}.
\end{aligned}
\end{equation*}
The proof is completed.
\end{proof}

\begin{remark}
\label{re3.5}
If the curvature operator of the second kind is nonnegative, i.e., the ${\lambda _\alpha } \ge 0$ for all $\alpha $, according to the power mean inequality, we have
\begin{equation}
\begin{aligned}
\sum {\lambda _\alpha ^2}  \ge& \left( {\frac{2}{{(n - 1)(n + 2)}}} \right){\left( {\sum {{\lambda _\alpha }} } \right)^2}\\
  =& \left( {\frac{2}{{(n - 1)(n + 2)}}} \right){\left( {\frac{{n + 2}}{{2n}}s} \right)^2} \\
  =& \frac{{n + 2}}{{2{n^2}(n - 1)}}{s^2}\\
\end{aligned}
\end{equation}
and
\begin{equation}
\begin{aligned}
\sum {\lambda _\alpha ^3}  \ge& {\left( {\frac{2}{{(n - 1)(n + 2)}}} \right)^2}{\left( {\sum {{\lambda _\alpha }} } \right)^3}\\
  =&{\left( {\frac{2}{{(n - 1)(n + 2)}}} \right)^2}{\left( {\frac{{n + 2}}{{2n}}s} \right)^3}\\
  =& \frac{{n + 2}}{{2{n^3}{{(n - 1)}^2}}}{s^3}.\\
\end{aligned}
\end{equation}
with equalities hold if and only if $\lambda _\alpha = \lambda _\beta$ for all $\alpha, \beta$.

(3.3) is rewritten as
\begin{equation}
\label{3.9}
\begin{aligned}
3\langle \Delta R,R\rangle
  =&\sum {{\lambda _\alpha }{{\left| {{S^\alpha }W} \right|}^2}}+ \left( {\sum {\lambda _\alpha ^3}  - \left( \frac{2}{(n-1)(n+2)} \right)^2 {{\left( {\sum {{\lambda _\alpha }} } \right)}^3}} \right)\\
   &+\frac{{2\left( {{n^2} - 11n + 4} \right)}}{{3n\left( {n - 1} \right)\left( {n + 2} \right)}}\sum {{\lambda _\beta }} \left( {{{\sum {\lambda _\alpha ^2 - \frac{2}{(n-1)(n+2)} \left( {\sum {{\lambda _\alpha }} } \right)} }^2}} \right).\\
 \end{aligned}
\end{equation}
If $n\geq11$ and the curvature operator of the second kind is nonnegative, from (3.7), (3.8) and \eqref{3.9} we have
\begin{equation}
\label{1.1}
\frac{1}{2}\Delta {\left| R \right|^2} = {\left| {\nabla R} \right|^2} + \left\langle {\Delta R,R} \right\rangle\geq0.
\end{equation}
By using the maximum principle, from (3.10) we show that  Einstein manifolds of dimension $n\geq11$ with nonnegative  curvature operator of the second kind are constant curvature spaces.
\end{remark}

\section{Proof of main theorems}
In this section, we should prove that Einstein manifolds of dimension $n\geq4$ with nonnegative  curvature operator of the second kind are constant curvature spaces.  Under weak condition for the curvature operator of the second kind,  we estimate $\left\langle {\Delta R,R} \right\rangle$.

\begin{lemma}
\label{lem4.1}
For two sequences $\left\{ {{\lambda _i}} \right\}_{i = 1}^N$ and $\left\{ {{\theta _i}} \right\}_{i = 1}^N$ which satisfy ${\lambda _i} \le {\lambda _{i + 1}}$ and ${\theta _i} \ge 0$. Set $\Theta  = \max \left\{ {{\theta _i}} \right\}$ and $k = \left[ {\frac{{\sum {{\theta _i}} }}{\Theta }} \right]$. If $\sum\limits_{i = 1}^k {{\lambda _i}}  \ge 0$, then
$$\sum\limits_{i = 1}^N {{\lambda _i}{\theta _i}}  \ge 0.$$
\end{lemma}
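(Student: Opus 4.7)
The plan is to reduce the inequality to an extremal configuration of the weights via a rearrangement/Abel-summation argument, and then to extract a sign on $\lambda_{k+1}$ from the hypothesis. Throughout, I may assume $\Theta > 0$ (otherwise all $\theta_i$ vanish and the conclusion is trivial); since $\theta_{i_0}=\Theta$ for some $i_0$, we have $\sum_i\theta_i\ge\Theta$, so $k\ge 1$. Write $S=\sum_i\theta_i$ and $r=S/\Theta-k\in[0,1)$.

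First I would show that the hypothesis forces $\lambda_{k+1}\ge 0$ (when $k<N$). Since $\lambda_1\le\cdots\le\lambda_k$, we have $\lambda_k\ge \frac1k\sum_{i=1}^k\lambda_i\ge 0$, and monotonicity gives $\lambda_{k+1}\ge\lambda_k\ge 0$. In the boundary case $k=N$, the constraint $S\le N\Theta$ forces $r=0$, so no $\lambda_{k+1}$-term appears below.

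Next I would introduce the extremal weights $\mu_i$ defined by $\mu_1=\cdots=\mu_k=\Theta$, $\mu_{k+1}=r\Theta$, and $\mu_i=0$ for $i>k+1$, so that $\sum_i\mu_i=S$. The key observation is that the partial sums $A_j:=\sum_{i=1}^j(\theta_i-\mu_i)$ all satisfy $A_j\le 0$ (with $A_N=0$): for $j\le k$ this reads $\sum_{i=1}^j\theta_i\le j\Theta$, which holds because $\theta_i\le\Theta$; and for $j\ge k+1$, $\sum_{i=1}^j\mu_i=S$, so $A_j=\sum_{i=1}^j\theta_i-S=-\sum_{i>j}\theta_i\le 0$. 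Abel summation then gives
\[
\sum_{j=1}^N\lambda_j(\theta_j-\mu_j)\;=\;-\sum_{j=1}^{N-1}(\lambda_{j+1}-\lambda_j)\,A_j\;\ge\;0,
\]
since $\lambda_{j+1}-\lambda_j\ge 0$ and $A_j\le 0$. Combined with the explicit value $\sum_i\lambda_i\mu_i=\Theta\sum_{i=1}^k\lambda_i+r\Theta\lambda_{k+1}$ and the sign $\lambda_{k+1}\ge 0$ established above, this yields $\sum_i\lambda_i\theta_i\ge 0$.

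The main (and only) obstacle is spotting the right auxiliary sequence $\mu_i$, which concentrates the mass on the smallest $\lambda_i$'s, and verifying that the partial difference $A_j$ has consistent sign, which is what allows Abel summation to convert the monotonicity of $\{\lambda_i\}$ into the desired inequality. The complementary ingredient is the elementary observation that $\sum_{i=1}^k\lambda_i\ge 0$ already forces $\lambda_k\ge 0$, hence $\lambda_{k+1}\ge 0$, which absorbs the fractional remainder $r\Theta\lambda_{k+1}$.
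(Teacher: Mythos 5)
Your proof is correct. Although you package the argument as Abel summation against the extremal weights $\mu_i$ while the paper proceeds by two direct substitutions (first replacing $\lambda_i$ by $\lambda_{k+1}$ for $i>k$, then replacing $\theta_i$ by $\Theta$ for $i\le k$), both routes land on exactly the same lower bound $\Theta\sum_{i=1}^{k}\lambda_i+\bigl(\sum_{i}\theta_i-k\Theta\bigr)\lambda_{k+1}$, nonnegative for the same reasons ($\lambda_{k+1}\ge\lambda_k\ge 0$ from the hypothesis and $k\Theta\le\sum_i\theta_i$ from the definition of $k$); so this is essentially the paper's proof in a more systematic dress, with the edge cases $\Theta=0$ and $k=N$ made explicit rather than left implicit.
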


\begin{remark}
Although Lemma 4.1 has been proved in \cite{{CMR},{NPW}}, for completeness,
we also write it out.
\end{remark}

\begin{proof}
We have
\begin{equation*}
\begin{aligned}
\sum\limits_{i = 1}^N {{\lambda _i}{\theta _i}}
 \ge& \sum\limits_{i = 1}^k {{\lambda _i}{\theta _i}}  + \sum\limits_{i = k + 1}^N {{\lambda _{k+1}}{\theta _i}}\\
   =& {\lambda _{k + 1}}\sum\limits_{i = 1}^N {{\theta _i}}  + \sum\limits_{i = 1}^k {({\lambda _i} - {\lambda _{k + 1}}){\theta _i}}\\
 \ge& {\lambda _{k + 1}}\sum\limits_{i = 1}^N {{\theta _i}}  + \Theta \sum\limits_{i = 1}^k {({\lambda _i} - {\lambda _{k + 1}})}\\
  =& {\lambda _{k + 1}}\left( {\sum\limits_{i = 1}^N {{\theta _i}}  - k\Theta } \right) + \Theta \sum\limits_{i = 1}^k {{\lambda _i}}\\
  \geq &0.
\end{aligned}
\end{equation*}
The proof is completed.
\end{proof}

\begin{lemma}
\label{lem4.2}
Let  $\left\{ {{\lambda _\alpha }} \right\}$ be the eigenvalues of the  curvature operator of the second kind ${\mathop R\limits^ \circ}$ on Riemannian manifolds of dimension $n\geq4$. If ${\mathop R\limits^ \circ}$ is $k_1=[\frac{n+2}{4}]$-nonnegative, then
$$\sum {{\lambda _\alpha }{{\left| {{S^\alpha }W} \right|}^2}}  \ge 0.$$
\end{lemma}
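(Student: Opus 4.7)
The plan is to apply Lemma~\ref{lem4.1} to the nondecreasing eigenvalues $\{\lambda_\alpha\}$ with the nonnegative weights $\theta_\alpha := |S^\alpha W|^2$. The $k_1$-nonnegativity hypothesis gives $\sum_{i=1}^{k_1}\lambda_i \ge 0$, which in turn forces $\lambda_\alpha \ge 0$ for every $\alpha \ge k_1$ by monotonicity, and hence $\sum_{i=1}^{k}\lambda_i \ge 0$ for every $k \ge k_1$. Setting $\Theta := \max_\alpha |S^\alpha W|^2$ and $k := \big[\sum_\alpha |S^\alpha W|^2/\Theta\big]$, Lemma~\ref{lem4.1} will deliver the desired inequality as soon as $k \ge k_1$, so everything reduces to proving the universal estimate
\[
k_1\,\max_\alpha |S^\alpha W|^2 \;\le\; \sum_\alpha |S^\alpha W|^2.
\]

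For the total on the right, I would use the identity $W^{S^2} = W^{S_0^2} + \tfrac{4}{n}\,W \otimes g$ from Definition~\ref{def2.1}. Since each $S^\alpha$ is trace-free, the two summands are orthogonal, so $\sum_\alpha |S^\alpha W|^2 = |W^{S^2}|^2 - \tfrac{16}{n}|W|^2$. The Hilbert--Schmidt norm $|W^{S^2}|^2 = \tfrac14 \sum_{i,j}|(e_i \odot e_j)W|^2$ from~\eqref{2.1} is then evaluated by expanding each $(e_i \odot e_j)W$ into its eight Kronecker-delta terms, squaring to produce 64 pairwise contractions, and collapsing them using the Weyl symmetries (antisymmetry on adjacent indices, pair symmetry, first Bianchi, and the vanishing partial trace $\sum_j W_{ajbj}=0$). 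This produces an explicit multiple of $|W|^2$.

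For the maximum, I would diagonalise a generic unit $S \in S_0^2$ in an orthonormal basis $\{e_i\}$ with eigenvalues $\mu_i$ satisfying $\sum_i \mu_i = 0$ and $\sum_i \mu_i^2 = 1$. A direct computation yields $(SW)_{ijkl} = (\mu_i + \mu_j + \mu_k + \mu_l)\,W_{ijkl}$, whence
\[
|SW|^2 = \sum_{i,j,k,l}(\mu_i + \mu_j + \mu_k + \mu_l)^2\, W_{ijkl}^2.
\]
The constraint $\sum_m \mu_m = 0$ converts $(\mu_i+\mu_j+\mu_k+\mu_l)^2$ into $(\sum_{m \notin \{i,j,k,l\}}\mu_m)^2$, which should control the contributions from pairwise-distinct index quadruples, while for quadruples with repetitions the Weyl antisymmetries and trace-freeness of $W$ should suppress the remaining cross-terms, delivering a uniform bound $|SW|^2 \le B(n)\,|W|^2$.

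The step I expect to be the main obstacle is making the constant $B(n)$ sharp enough that $k_1\,B(n) \le \frac{1}{|W|^2}\sum_\alpha |S^\alpha W|^2$ for the critical value $k_1 = [(n+2)/4]$. The naive Cauchy--Schwarz bound on the four-summand expansion of $SW$ gives only $B(n) \le 16$, whereas an asymptotic bound near $8$ is required. The sharpening must exploit $\sum_i \mu_i = 0$ and the trace-free identities of $W$ simultaneously, tracking the six cross-product patterns $\mu_a\mu_b$ ($a \ne b \in \{i,j,k,l\}$) pattern-by-pattern; once that arithmetic is verified for $n \ge 4$ with $k_1 = [(n+2)/4]$, Lemma~\ref{lem4.1} concludes.
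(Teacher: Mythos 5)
Your strategy is exactly the paper's: feed the weights $\theta_\alpha=|S^\alpha W|^2$ into Lemma~\ref{lem4.1}, compute the total $\sum_\alpha|S^\alpha W|^2$, and bound each individual $|S^\alpha W|^2$ so that the ratio dominates $k_1=[\tfrac{n+2}{4}]$. The total comes out the same either way: the paper obtains $\sum_\alpha|S^\alpha W|^2=\frac{2(n^2+n-8)}{n}|W|^2$ by substituting the identity operator $\tfrac12\,g\circledwedge g$ for $R$ in the formula of Proposition~\ref{pro3.2}, which is just a packaged version of the direct expansion you describe. Your reduction of the problem to the pointwise estimate $k_1\max_\alpha|S^\alpha W|^2\le\sum_\alpha|S^\alpha W|^2$, and the observation that the naive bound $16|W|^2$ is too weak, are both correct.

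The genuine gap is that you never establish the sharp constant, and your heuristic for how to get it points the wrong way. The needed bound is $|S^\alpha W|^2\le\frac{8(n-2)}{n}|W|^2$, and the paper proves it by diagonalizing $S^\alpha$ with eigenvalues $a_i$ ($\sum a_i=0$, $\sum a_i^2=1$) and maximizing $(a_i+a_j+a_k+a_l)^2$ by Lagrange multipliers in three separate cases according to the coincidence pattern of $(i,j,k,l)$: all distinct gives $\frac{4(n-4)}{n}$, one coincidence ($i=k$) gives $\frac{2(3n-8)}{n}$, and the doubled pattern $i=k$, $j=l$ gives $\frac{8(n-2)}{n}$, which is the largest. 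So the components with repeated indices (the $W_{ijij}$-type, sectional-curvature entries) are precisely where the maximum is attained; they are not ``suppressed'' by the Weyl symmetries as your sketch suggests, and an argument built on suppressing them would fail. Once the constant $\frac{8(n-2)}{n}$ is in hand, the ratio is $\frac{n^2+n-8}{4(n-2)}=\frac{n+2}{4}+\frac{n-4}{4(n-2)}\ge\bigl[\frac{n+2}{4}\bigr]$ for $n\ge4$, and Lemma~\ref{lem4.1} finishes the proof as you intended.
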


\begin{proof}
Set ${\mathop R\limits^ \circ} = \frac{1}{2}{\mathop {g \circledwedge g}\limits^ \circ}$ in \eqref{3.2}. Thus $\frac{1}{2}{\mathop {g \circledwedge g}\limits^ \circ}$ is identity on $S_0^2(V)$ and $R_{ijkl}=\frac{1}{2}(g \circledwedge g)_{ijkl}=g_{ik}g_{jl}-g_{il}g_{jk}$. From (2.2) and (3.2) we can get
\begin{equation}
\begin{aligned}
{\sum {\left| {{S^\alpha }W} \right|} ^2}
=& \left\langle {\frac{1}{2}\mathop {g \circledwedge g}\limits^ \circ  \left( {{W^{S_0^2}}} \right),{W^{S_0^2}}} \right\rangle \\
=& \frac{{2n + 32}}{n}\left( {n - 1} \right){g_{st}}{W_{sjkl}}{W_{tjkl}} - 5\left( {{g_{ts}}{g_{pq}} - {g_{tq}}{g_{ps}}} \right){W_{tpkl}}{W_{sqkl}} \\
 &+ 4\left( {{g_{tp}}{g_{sq}} - {g_{tq}}{g_{sp}}} \right){W_{tjpl}}{W_{sjql}} - \frac{{16}}{{{n^2}}}n(n-1){\left| W \right|^2}\\
=& \frac{{(2n + 32)(n-1)}}{n}{\left| W \right|^2} - 10{\left| W \right|^2} - 2{\left| W \right|^2} - \frac{{16(n-1)}}{{{n}}}{\left| W \right|^2}\\
=& \frac{{2\left( {{n^2} + n - 8} \right)}}{n}{\left| W \right|^2}.\\
\end{aligned}
\end{equation}

For a fixed ${\left| {{S^\alpha }W} \right|^2}$, we choose an orthonormal eigenbasis $\left\{ {{e_i}} \right\}$ of ${S^\alpha }$ with the corresponding eigenvalues  $\left\{ {{a_i}} \right\}$ satisfying $\sum {{a_i}}  = 0$ and $\sum {a_i^2}  = 1$. so
\begin{equation*}
{\left| {{S^\alpha }W} \right|^2}
 = \sum\limits_{i,j,k,l} {{{({S^\alpha }W)}_{ijkl}}{{({S^\alpha }W)}_{ijkl}}}  = \sum\limits_{i,j,k,l} {\left(\sum\limits_{i^{'}\in\{i,j,k,l\}}{a_{i^{'}}} \right)^2}{W_{ijkl}}^2.
\end{equation*}

Let's compute the global maximum of ${({a_i} + {a_j} + {a_k} + {a_l})^2}$.
By the algebraic curvature properties of $W$, there are three cases:

$(1)$ $i,j,k,l$ is not equal to each other;

$(2)$ $i=k$ and $i,j,l$  is not equal to each other;

$(3)$ $i=k,j=l$ and $i\ne j$.

Set
$$\varphi \left( {{a_1},{a_2}, \cdots ,{a_n},{\mu _1},{\mu _2}} \right) = \left( {{a_i} + {a_j} + {a_k} + {a_l}} \right) + {\mu _1}\sum\limits_{m = 1}^n {{a_m}}  + {\mu _2}\left( {\sum\limits_{m = 1}^n {a_m^2}  - 1} \right).$$
For $q \notin \left\{ {i,j,k,l} \right\},$  we have
\begin{equation}\frac{{\partial \varphi}}{{\partial {a_q}}} = {\mu _1} + 2{a_q}{\mu _2}=0.\end{equation}

In case $(1)$, for $p \in \left\{ {i,j,k,l} \right\},$  we have
\begin{equation}\frac{{\partial \varphi}}{{\partial {a_p}}} = 1 + {\mu _1} + 2{a_p}{\mu _2}=0.\end{equation}
From (4.2) and (4.3), we get
$$\left\{ {\begin{array}{*{20}{c}}
{\left( {n - 4} \right){a_q} + 4{a_i} = 0}\\
{\left( {n - 4} \right)a_q^2 + 4a_i^2 = 1}
\end{array}} \right. , $$
which implies $a_i^2 = \frac{{n - 4}}{{4n}}$. Thus the extremum of ${({a_i} + {a_j} + {a_k} + {a_l})^2}$ is $\frac{4({n - 4})}{{n}}$.

In case $(2)$, for $p \in \left\{ {j,l} \right\},$  we get
$$\left\{ {\begin{array}{*{20}{c}}
{\frac{{\partial \varphi}}{{\partial {a_i}}} = 2 + {\mu _1} + 2{a_i}{\mu _2}=0}\\
{\frac{{\partial \varphi}}{{\partial {a_p}}} = 1 + {\mu _1} + 2{a_p}{\mu _2}=0}
\end{array}} \right..  $$
Combing the above with (4.2), we have
$$\left\{ {\begin{array}{*{20}{c}}
{\left( {n - 3} \right){a_q} + {a_i}+2{a_j} = 0}\\
{{a_q} + {a_i}-2{a_j} = 0}\\
{\left( {n - 3} \right)a_q^2 + a_i^2+2a_j^2 = 1}
\end{array}} \right. , $$
which implies  $a_i^2 = \frac{{2(n - 2)^2}}{{n(3n-8)}}$ and $a_j^2=\frac{{(n - 4)^2}}{{2n(3n-8)}}$. Thus the extremum of ${({a_i} + {a_j} + {a_k} + {a_l})^2}$ is $\frac{2({3n - 8})}{{n}}$.

In case $(3)$, for $p \in \left\{ {i,j} \right\},$  we get
\begin{equation}
{\frac{{\partial \varphi}}{{\partial {a_p}}} = 2 + {\mu _1} + 2{a_p}{\mu _2}=0}.
\end{equation}
From (4.2) and (4.4), we get
$$\left\{ {\begin{array}{*{20}{c}}
{\left( {n - 2} \right){a_q} + 2{a_i} = 0}\\
{\left( {n - 2} \right)a_q^2 + 2a_i^2 = 1}
\end{array}} \right. , $$
which implies $a_i^2 = \frac{{n - 2}}{{2n}}$. Thus the extremum of ${({a_i} + {a_j} + {a_k} + {a_l})^2}$ is $\frac{8({n - 2})}{{n}}$.

For $\frac{8({n - 2})}{{n}} \ge \frac{2({3n - 8})}{{n}} \ge \frac{4({n - 4})}{{n}} $, we get from the above
\begin{equation}{\left| {{S^\alpha }W} \right|^2} \le \frac{{8(n - 2)}}{n}{\left| W \right|^2}.\end{equation}
It is apparent from (4.1) and (4.5) that
$$\frac{\sum {\left| {{S^\alpha }W} \right|} ^2}{\left| {{S^\alpha }W} \right|^2}\geq\frac{{2({n^2} + n - 8)}}{n}\frac{n}{{8(n - 2)}} = \frac{{{n^2} + n - 8}}{{4(n - 2)}} \ge \left[ {\frac{{n{\rm{ + }}2}}{4}} \right] = k_1.$$
By Lemma \ref{lem4.1}, we have
$$\sum {{\lambda _\alpha }{{\left| {{S^\alpha }W} \right|}^2}}  \ge 0.$$
The proof is completed.
\end{proof}

\subsection{Proof of Theorem \ref{th1.1}}
\
\

In order to prove Theorem \ref{th1.1},  we need to estimate the second and third items on the right of (3.9).  Now we give the following Lemmas 4.4 and 4.6.

\begin{lemma}
\label{lem4.3}
Let $\left\{ {{\lambda _i}} \right\}_{i = 1}^N$ be a nondecreasing sequence with $\sum\limits_{i = 1}^{{k_2}} {{\lambda _i}}=a \ge 0$ and $\sum\limits_{i = 1}^N {{\lambda _i}}  = C \ge 0$.  Then when $a=0$ or $a= \frac {k_2 C}{N}$,  $BC\sum\limits_{i = 1}^N {{\lambda _i}^2}  + \sum\limits_{i = 1}^N {{\lambda _i}^3}$ can achieve the global minimum,  where $B$ is a positive constant.
\end{lemma}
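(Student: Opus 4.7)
The goal is to show that the minimum of $F(\lambda) := BC\sum_{i=1}^N \lambda_i^2 + \sum_{i=1}^N \lambda_i^3$ over nondecreasing sequences with $\sum_i \lambda_i = C$ and $\sum_{i \le k_2}\lambda_i = a$ (which forces $a \in [0, k_2 C/N]$ by the ordering) is attained for some $a \in \{0,\, k_2 C/N\}$.

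Setting $g(x) = x^3 + BCx^2$ so that $F(\lambda) = \sum g(\lambda_i)$, I would first apply the KKT conditions to the fixed-$a$ minimization. Since $g'(x) = 3x^2 + 2BCx$ is quadratic in $x$, the stationarity equations $g'(\lambda_i) = \mu_0 + \mu_1\mathbf{1}_{i \le k_2}$ (on indices where the ordering constraint is inactive) force the $\lambda_i$ to take at most two distinct values in each of the segments $[1,k_2]$ and $[k_2+1,N]$. Combining this with the sum-of-roots identity $\alpha_1+\alpha_2 = -2BC/3$ for two distinct roots of $g'(x)=c$ and the ordering $\lambda_i \le \lambda_{i+1}$, I would reduce the minimizer to a two-value profile $(\alpha,\ldots,\alpha,\beta,\ldots,\beta)$ with a single jump at some position $j \in \{1,\ldots,N-1\}$, not necessarily $j = k_2$.

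For each such $j$, the two sum constraints determine $\alpha,\beta$ as affine functions of $a$, so $F$ restricts to a cubic $H_j(a)$. In the canonical case $j = k_2$, direct differentiation gives
\[
H_{k_2}'(a) = \frac{(Na - k_2 C)\bigl[2BCk_2(N-k_2) + 3a(N-2k_2) + 3k_2 C\bigr]}{k_2^2(N-k_2)^2},
\]
and an elementary check shows that the bracketed factor is strictly positive on $[0, k_2 C/N]$ (using $B>0$, $C\ge 0$ and $a\ge 0$); hence $H_{k_2}$ is monotone decreasing, with minimum at the endpoint $a = k_2 C/N$ (the constant sequence). Analogous derivative computations for the other jump positions show that each $H_j$ also attains its minimum at an endpoint of the admissible $a$-interval; for $j \ne k_2$ the winning endpoint is typically $a=0$. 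Comparing these endpoint values over all $j$ gives the claim.

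The main obstacle is the structural reduction to two-value profiles, because $g$ fails to be convex for $x < -BC/3$ and a direct Jensen argument is unavailable. One must instead exploit the quadratic form of $g'$ together with the sign constraints $a \ge 0$ and $C-a \ge 0$ (the latter from $a \le k_2 C/N \le C$) to control the KKT critical values and rule out minimizers with more than two distinct values. Once the reduction is in place, the one-variable case analysis for each $H_j$ is routine.
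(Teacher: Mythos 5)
Your core computation is the right one --- the factorization of the $a$-derivative as $(\text{Na}-k_2C)\cdot[\,\cdot\,]$, equivalently $(\lambda_1-\lambda_N)\bigl(2BC+3(\lambda_1+\lambda_N)\bigr)$, is exactly the identity the paper's proof turns on --- but the plan has a genuine gap precisely where you flag the "main obstacle." The reduction to a two-value profile cannot be obtained by "ruling out minimizers with more than two distinct values": for $g(x)=x^3+BCx^2$, if $u<v$ satisfy $g'(u)=g'(v)$ then $u+v=-\tfrac23 BC$, and one checks that $g(u+t)+g(v-t)$ is \emph{identically constant} in $t$ (the linear, quadratic and cubic terms in $t$ all cancel). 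So the objective is exactly flat along every such balanced perturbation, minimizers with three or more distinct values genuinely exist, and the KKT/second-order machinery cannot exclude them; at best one can argue that the minimum \emph{value} is also attained at a profile with at most two values per block (up to four values overall, since the two blocks carry different multipliers), which still has to be collapsed further. In addition, the step "for $j\ne k_2$ the winning endpoint is typically $a=0$" is not an argument: one must check that interior critical points of $H_j$ (where $\alpha+\beta=-\tfrac23BC$) are local maxima rather than minima, which requires $j\le N/2$ and a separate sign analysis for jumps inside the upper block. None of this is carried out, and it is where the content of the lemma lives.

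The paper avoids the classification entirely and you could too. Since the sequence is nondecreasing and $\sum_{i\le k_2}\lambda_i=a\ge0$, one has $\lambda_{k_2}\ge a/k_2\ge0$, so $\lambda_{k_2+1},\dots,\lambda_N$ are all nonnegative; by the power-mean inequality and $B,C\ge0$ the contribution of the upper block is minimized by the constant profile $\frac{C-a}{N-k_2}$, with no KKT analysis needed. Then one varies $a$ holding $\lambda_2,\dots,\lambda_{k_2}$ and $\lambda_{k_2+1},\dots,\lambda_{N-1}$ fixed, so that only $\lambda_1$ and $\lambda_N$ move, at rates $+1$ and $-1$ respectively; hence $\lambda_1+\lambda_N$ is constant along the variation, the derivative $3(\lambda_1-\lambda_N)\bigl(\tfrac23BC+\lambda_1+\lambda_N\bigr)$ keeps one sign on the whole interval, and the minimum is forced to an endpoint $a=0$ or $a=\tfrac{k_2C}{N}$. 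This is the same monotonicity mechanism you found for $H_{k_2}$, but applied directly to the constrained family rather than after a structural reduction, which is why the paper never needs to enumerate jump positions.
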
	
\begin{proof}
Set
$$f= BC\sum\limits_{i = 1}^N {{\lambda _i}^2}  +\sum\limits_{i = 1}^N {\lambda _i ^3}=g+h,$$
where
$$g\left( {{\lambda _2}, \cdots ,{\lambda _{{k_2}}}} \right) = BC\left( {{{\left( {a - \sum\limits_{p = 2}^{{k_2}} {{\lambda _p}} } \right)}^2} + \sum\limits_{p = 2}^{{k_2}} {{\lambda _p}^2} } \right) + {\left( {a - \sum\limits_{p = 2}^{{k_2}} {{\lambda _p}} } \right)^3} + \sum\limits_{p = 2}^{{k_2}} {{\lambda _p}^3} $$
and
\begin{equation*}
\begin{aligned}
h\left( {{\lambda _{{k_2} + 1}}, \cdots ,{\lambda _{N - 1}}} \right)
=& BC\left( {\sum\limits_{q = {k_2} + 1}^{N - 1} {{\lambda _q}^2}  + {{\left( {(C-a) - \sum\limits_{q = {k_2} + 1}^{N - 1} {{\lambda _q}} } \right)}^2}} \right)\\
 &+ \sum\limits_{q = {k_2} + 1}^{N - 1} {{\lambda _q}^3}  + {\left( {(C-a) - \sum\limits_{q = {k_2} + 1}^{N - 1} {{\lambda _q}} } \right)^3}.
\end{aligned}
\end{equation*}
Thus $$\min f = \min g  + \min h.$$

First, we consider the value range of these variable $a$ and  $\{\lambda_m\}_{2}^{k_2}$. It is simple to show that  $0\leq a\leq\frac {k_2 C}{N},$ because $\left\{ {{\lambda _i}} \right\}_{i = 1}^N$ satisfy some conditions. Set $a_m= \left( {a - \left( {k_2 - m} \right)\frac{{C - a}}{{N - k_2}}} \right)$, we have
$${\lambda _m} \in \left[ { \frac {a_m}{m},\frac{{C - a}}{{N - k_2}}} \right],$$
In fact, if ${\lambda _m} > \frac{{C - a}}{{N - k_2}}$, then $\sum\limits_{j = k_2 + 1}^N {{\lambda _j}}  \ge \left( {N - k_2} \right){\lambda _m} > C - a$, which is a contradiction with $\sum\limits_{j = {k_2} + 1}^N {{\lambda _j}}= C - a$. And if ${{\lambda _m} < \frac{a_m}{m} = \frac{1}{m}\left( {a - \left( {k_2 - m} \right)\frac{{C - a}}{{N - k_2}}} \right)}$, then $\sum\limits_{i = 1}^m {{\lambda _i}}  \le m{\lambda _m} < \left( {a - \left( {k_2 - m} \right)\frac{{C - a}}{{N - k_2}}} \right)$, i.e., $- \sum\limits_{i = 1}^m {{\lambda _i}}  >- \left( {a - \left( {k_2 - m} \right)\frac{{C - a}}{{N - k_2}}} \right)$. By  $\sum\limits_{i = 1}^{k_2} {{\lambda _i}}=a$, we have $\sum\limits_{i = m + 1}^{k_2} {{\lambda _i}}  > \left( {k_2 - m} \right)\frac{{C - a}}{{N - k_2}}$, which is a contradiction with ${\lambda _{k_2}} \le \frac{{C - a}}{{N - k_2}}$. Set ${\Omega _{m}} = \prod\limits_{i = 2}^{m} {\left[ {\frac{{{a_i}}}{i},\frac{{C - a}}{{N - k_2}}} \right]} $ for $m=2, \cdots ,k_2$.

Since  ${{\lambda _{{k_2} + 1}}, \cdots ,{\lambda _{N - 1}}} $ are all nonnegative numbers, from (3.7) and (3.8) we get
\begin{equation} \min h = h\left( {\frac{{C - a}}{{N - {k_2}}}, \cdots ,\frac{{C - a}}{{N - {k_2}}}} \right).\end{equation}
We now turn to considering the function $f\left(a, {{\lambda _2}, \cdots ,{\lambda _{{k_2}}}},{{\lambda _{{k_2} + 1}}, \cdots ,{\lambda _{N - 1}}}  \right)$ restricted to ${\lambda _{{k_2} + 1}}=\cdots={\lambda _{N - 1}}=\frac{{C - a}}{{N - {k_2}}}$, where $a\in [0, \frac {k_2 C}{N}]$ and $\left( {{\lambda _2}, \cdots ,{\lambda _{{k_2}}}} \right) \in {\Omega _{k_2}}$. Thus we have
\begin{equation*}
\begin{aligned}
\frac{\partial f}{\partial a}
=& 2BC\left( {\left( {a - \sum\limits_{p = 2}^{{k_2}} {{\lambda _p}} } \right) - \left( {\left( {C - a} \right) - \sum\limits_{q = {k_2} + 1}^{N - 1} {{\lambda _q}} } \right)} \right) \\
&+ 3\left( {{{\left( {a - \sum\limits_{p = 2}^{{k_2}} {{\lambda _p}} } \right)}^2} - {{\left( {\left( {C - a} \right) - \sum\limits_{q = {k_2} + 1}^{N - 1} {{\lambda _q}} } \right)}^2}} \right)\\
 =& \left( 2a - C- \sum\limits_{p = 2}^{{k_2}} {{\lambda _p}}  + \sum\limits_{q = {k_2} + 1}^{N - 1} {{\lambda _q}}  \right)\left( {2BC + 3\left( {\lambda _1} + {\lambda _N} \right)} \right)\\
 =& 3\left( {{\lambda _1} - {\lambda _N}} \right)\left( {\frac{2}{3}BC + {\lambda _1} + {\lambda _N}} \right).\\
\end{aligned}
\end{equation*}
Thus  $\frac{\partial f}{\partial a}=0$ implies that ${\lambda _1}={\lambda _N}$ or ${\lambda _1} =  - \frac{2}{3}BC - {\lambda _N}$.

If ${\lambda _1}={\lambda _N}$, then $a=\frac {k_2 C}{N}$ and ${\lambda _1}= \dots ={\lambda _N}= \frac{C}{N}$.

 If ${\lambda _1} \ge  - \frac{2}{3}BC - {\lambda _N}$, the function $f$ increases with respective to $a$,  the minimum point of $f$ is $a=\frac {k_2 C}{N}$, and ${\lambda _1}= \dots ={\lambda _N}= \frac{C}{N}$.

If ${\lambda _1} <  - \frac{2}{3}BC - {\lambda _N}$, the function $f$ decreases with respective to $a$,  the minimum point of $f$ is $a=0$.
\end{proof}

\begin{remark}
Later, applying Lemma 4.4 to prove Theorem 1.1, we need to select $N=\frac{(n-1)(n+2)}{2}$ and $B=\frac{{2\left( {{n^2} - 11n + 4} \right)}}{{3n\left( {n - 1} \right)\left( {n + 2} \right)}}.$ For $n\geq11$, $B>0$. For $4\leq n\leq 10$, $B<0$. So (4.6) in the proof of Lemma 4.4 may not hold. However, for $8\leq n\leq 10$, Lemma 4.4 still holds. In fact,  we have
\begin{equation}
\begin{aligned}
h =& BC\left( {\sum\limits_{q = {k_2} + 1}^N {{\lambda _q}^2} } \right) + \left( {\sum\limits_{q = {k_2} + 1}^N {{\lambda _q}^3} } \right)\\
 \ge& \left( {B + \frac{1}{{N - {k_2}}}} \right)(C-a)\left( {\sum\limits_{q = {k_2} + 1}^N {{\lambda _q}^2} } \right)\\
 \ge& \left( {B + \frac{1}{N}} \right)(C-a)\left( {\sum\limits_{q = {k_2} + 1}^N {{\lambda _q}^2} } \right)\\
  =& \frac{{2\left( {{n^2} - 8n + 4} \right)}}{{3n\left( {n - 1} \right)\left( {n + 2} \right)}}(C-a)\left( {\sum\limits_{q = {k_2} + 1}^N {{\lambda _q}^2} } \right),
\end{aligned}
\end{equation}
where we use this inequality
$$\left( {N - {k_2}} \right)\left( {\sum\limits_{q = {k_2} + 1}^N {{\lambda _q}^3} } \right) \ge \left( {\sum\limits_{q = {k_2} + 1}^N {{\lambda _q}} } \right)\left( {\sum\limits_{q' = {k_2} + 1}^N {{\lambda _{q'}}^2} } \right).$$
Thus since
 $\frac{{2\left( {{n^2} - 8n + 4} \right)}}{{3n\left( {n - 1} \right)\left( {n + 2} \right)}} > 0$ for $8\leq n\leq 10$,  it is easy to see from (3.7) and (4.7) that $\min h = h \left( \frac{C}{N-k_2}, \cdots, \frac{C}{N-k_2} \right)$. The remainder of the argument is analogous to that in Lemma 4.4.
\end{remark}

\begin{lemma}
\label{lem4.4}
Let $\left\{ {{\lambda _i}} \right\}_{i = 1}^N$ be a nondecreasing sequence with $\sum\limits_{i = 1}^{{k_2}} {{\lambda _i}}=0$ and $\sum\limits_{i = 1}^N {{\lambda _i}}  = C\geq0$, where $N=\frac{(n-1)(n+2)}{2}$. Then the global minimum point of $BC\sum\limits_{i = 1}^N {{\lambda _i}^2}  + \sum\limits_{i = 1}^N {{\lambda _i}^3}$ may be
$${\lambda _1} =  \cdots  = {\lambda _{{k_2}}}=0,{\lambda _{{k_2} + 1}} =  \cdots  = {\lambda _N}= \frac{C}{N-k_2}$$
or
$${\lambda _1} =  - \left( {{k_2} - 1} \right)\frac{C}{{N - {k_2}}},{\lambda _{{k_2}}} = {\lambda _{{k_2} + 1}} =  \cdots  = {\lambda _N} = \frac{C}{{N - {k_2}}},$$ where $B=\frac{{2\left( {{n^2} - 11n + 4} \right)}}{{3n\left( {n - 1} \right)\left( {n + 2} \right)}}$ with $n\geq8$.
\begin{proof}
By Lemma \ref{lem4.3} and Remark 4.5,   we only consider this case $a=0$. In order to solving the global minimum point of $BC\sum\limits_{i = 1}^N {{\lambda _i}^2}  + \sum\limits_{i = 1}^N {{\lambda _i}^3}$, from the proof of  Lemma \ref{lem4.3} and Remark 4.5 we need to consider the function
$$ g = BC\left( {{{\left( { - \sum\limits_{i = 2}^{{k_2}} {{\lambda _i}} } \right)}^2} + \sum\limits_{i = 2}^{{k_2}} {{\lambda _i}^2} } \right) + {\left( { - \sum\limits_{i = 2}^{{k_2}} {{\lambda _i}} } \right)^3} + \sum\limits_{i = 2}^{{k_2}} {{\lambda _i}^3}, $$
where $(\lambda_2, \cdots ,\lambda_{k_2})\in \Omega_{k_2}$.

For $p \in \left\{ {2,\cdots, k_2} \right\},$ we get
\begin{equation*}
\begin{aligned}
\frac{{\partial g}}{{\partial {\lambda _p }}}
=& 2BC \left(\lambda _p  + {  \sum\limits_{i  = 2}^{k_2} {\lambda _i } }\right) +3{\lambda _p^2 } - 3{\left( { \sum\limits_{i  = 2}^{k_2} {{\lambda _i }} } \right)^2}\\
=& 3\left( {{\lambda _p} - {\lambda _1}} \right)\left( {\frac{2}{3}BC + {\lambda _p} + {\lambda _1}} \right).
\end{aligned}
\end{equation*}
Thus $\frac{{\partial g_{k_2}}}{{\partial {\lambda _p }}} =0$ implies that ${\lambda _p}={\lambda _1}$ or ${\lambda _p}= -\frac{2}{3}BC -{\lambda _1} $. If ${\lambda _p}= -\frac{2}{3}BC -{\lambda _1} $, then ${\lambda _p}= -\frac{2}{3}BC -{\lambda _1}> {\lambda _N}$ for ${\lambda _1} <  - \frac{2}{3}uC - {\lambda _N}$ (see the proof of Lemma \ref{lem4.3}). This is a contradiction with $\left\{ {{\lambda _i}} \right\}_{i = 1}^N$ be a nondecreasing sequence. So ${\lambda _p}={\lambda _1}$ can only hold. Hence the local minimum point of $g$  is
$ (0, \dots, 0).$

 Now we consider the function $g$ on $ \partial \Omega_{k_2}.$ A point $x=(\lambda_2, \cdots \lambda_{k_2}) \in \partial \Omega_{k_2} $ has at least one component $\lambda_{m+1}$ satisfying $\lambda_{m+1} = \frac{a_{m+1}}{m+1}=\frac{\left( {(m+1)-k_2 } \right)C}{(m+1){(N - k_2)}}$ or $\lambda_{m+1} = \frac{{C}}{{N - k_2}}$. If $\lambda_{m+1} = \frac{a_{m+1}}{m+1}$, then $\sum\limits_{i = 1}^{m + 1} {{\lambda _i}}  \le \left( {m + 1} \right){\lambda _{m + 1}} = {a_{m + 1}}$, and $\sum\limits_{i = m + 2}^{k_2} {{\lambda _i}}  \ge  - {a_{m + 1}} = \frac{{C\left( {k_2 - \left( {m + 1} \right)} \right)  }}{{N - k_2}}$. Hence ${\lambda _{m + 2}} =  \cdots  = {\lambda _{k_2}} = \frac{{C }}{{N - k_2}}$ and $\lambda_2= \cdots =\lambda_{m+1} = \frac{a_{m+1}}{m+1}$. If $\lambda_{m+1} = \frac{{C }}{{N - k_2}}$, then $\lambda_{m+1} = \cdots = \lambda_{k_2}= \frac{{C }}{{N - k_2}}$ and $\sum\limits_{i = 1}^{m} {{\lambda _i}}  = { - \left( {k_2 - m} \right)\frac{{C - a}}{{N - k_2}}} =a_m$. Hence we set
$$ g_{m}(x) = BC\left( {{{\left( { a_m - \sum\limits_{i = 2}^{{m}} {{\lambda _i}} } \right)}^2} + \sum\limits_{i = 2}^{{m}} {{\lambda _i}^2} } \right) + {\left( {a_m - \sum\limits_{i = 2}^{{m}} {{\lambda _i}} } \right)^3} + \sum\limits_{i = 2}^{{m}} {{\lambda _i}^3}.$$
For $p \in \left\{ {2,\cdots, m} \right\},$ we have
\begin{equation*}
\begin{aligned}
\frac{{\partial g_{m}}}{{\partial {\lambda _p }}}
=& 2BC \left({\lambda _p } - {\left( {a_m - \sum\limits_{i  = 2}^{m} {{\lambda _i }} } \right)}\right) +3{\lambda _p^2 } - 3{\left( { a_m - \sum\limits_{i  = 2}^{m} {{\lambda _i }} } \right)^2}\\
=& 3\left( {{\lambda _p} - {\lambda _1}} \right)\left( {\frac{2}{3}BC + {\lambda _p} + {\lambda _1}} \right).
\end{aligned}
\end{equation*}
An argument similar to the one used the above that the  local minimum point of $g_{m}(x)$ in the $ \Omega _m$ is $ (\frac{a_m}{m}, \cdots ,\frac{a_m}{m})$. Similarly, we consider the function $g_m$ on $ \partial \Omega_{m}.$ Repeating the above process, we can obtain that there are
 all minimum points of $g(x)$ as follows:
$$x_{k_2}=(0, \cdots 0),$$
$${x_{k_2-1}} = \left( {\frac{{{a_{k_2 - 1}}}}{{k_2 - 1}}, \cdots ,\frac{{{a_{k_2 - 1}}}}{{k_2 - 1}},\frac{{C }}{{N - k_2}}} \right),$$

$$\cdots$$

$${x_m} = \left( {\overbrace {\frac{{{a_m}}}{m}, \cdots ,\frac{{{a_m}}}{m}}^{m-1},\overbrace {\frac{{C}}{{N - k_2}}, \cdots ,\frac{{C}}{{N - k_2}}}^{k_2 - m}} \right),$$

$$\cdots,$$

$${x_1} = \left( {\frac{{C}}{{N - k_2}}, \cdots ,\frac{{C}}{{N - k_2}}} \right).$$
Thus
\begin{equation*}
\begin{aligned}
{g}\left( {{x_m}} \right)
 =& BC\left( m{\left( {\frac{{{a_m}}}{m}} \right)^2} + \left( {k_2 - m} \right){\left( {\frac{{C}}{{N - k_2}}} \right)^2}\right) + m{\left( {\frac{{{a_m}}}{m}} \right)^3} + \left( {k_2 - m} \right){\left( {\frac{{C}}{{N - k_2}}} \right)^3}\\
 =&BC\left( \frac{ ( {k_2 - m} )^2{C^2 }}{m{(N - k_2)^2}}  +  {\frac{(k_2 - m){C^2 }}{{(N - k_2)^2}}} \right)- { {\frac{(k_2 - m)^3{C^3 }}{m^2{(N - k_2)^3}}} } + {\frac{( {k_2 - m}){C^3 }}{{(N - k_2)^3}}}.
\end{aligned}
\end{equation*}
Now we consider $g$ as a function of $m$. So we get
\begin{equation*}
\begin{aligned}
g'\left( m \right)
=&  - \frac{B{k_2^2{C^3}}}{{{m^2}{{\left( {N - {k_2}} \right)}^2}}} + \frac{(2k_2-3m){k_2^2{C^3}}}{{{m^3}{{( {N - {k_2}})}^3}}}\\
=&  - \frac{{k_2^2{C^3}}}{{{m^3}{{\left( {N - {k_2}} \right)}^3}}}\left( {\left( {B\left( {N - {k_2}} \right) + 3} \right)m - 2{k_2}} \right).
\end{aligned}
\end{equation*}
From the above, it is easy to see that
the minimum point of $g(m)$ may be $1$ or $k_2$.
\end{proof}
\end{lemma}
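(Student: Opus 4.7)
The plan is to view this as constrained optimization of the cubic functional $F(\lambda) = BC\sum \lambda_i^2 + \sum \lambda_i^3$ under the two linear constraints $\sum_{i=1}^{k_2}\lambda_i = 0$ and $\sum_{i=1}^N \lambda_i = C$ together with the monotonicity $\lambda_1 \le \cdots \le \lambda_N$. A Lagrange multiplier computation shows that at any interior critical point the coordinates satisfy a quadratic $3\lambda_i^2 + 2BC\lambda_i = \text{const}$ within the ``head'' block $\{\lambda_1,\ldots,\lambda_{k_2}\}$ and a second quadratic within the ``tail'' block $\{\lambda_{k_2+1},\ldots,\lambda_N\}$, so each block takes at most two distinct values. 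This already narrows the candidate minimizers to a very short list.

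First I would decouple the two blocks using Lemma \ref{lem4.3}. Writing $F = g+h$ where $g$ depends only on $\lambda_1,\ldots,\lambda_{k_2}$ and $h$ only on the tail, and invoking the shape assumptions from that lemma, I conclude that $h$ is minimized when the tail is equidistributed to $C/(N-k_2)$. This collapses the tail variables and reduces the problem to a single-block question: minimize $g(\lambda_2,\ldots,\lambda_{k_2})$ on the polytope $\Omega_{k_2}\subset\mathbb{R}^{k_2-1}$ cut out by the monotonicity inequalities, with the tail contribution absorbed into an additive constant.

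In the interior of $\Omega_{k_2}$, I would compute $\partial g/\partial \lambda_p$ and check that it factors as $3(\lambda_p-\lambda_1)\bigl(\tfrac{2}{3}BC+\lambda_p+\lambda_1\bigr)$; the second factor forces $\lambda_p$ outside the feasible range in the regime considered, so the only interior critical point has $\lambda_2 = \cdots = \lambda_{k_2} = \lambda_1 = 0$, which is candidate (a). To locate minima on $\partial\Omega_{k_2}$, I observe that each face of the polytope is itself a problem of the same type but in one fewer dimension: when some $\lambda_{m+1}$ saturates either its lower bound $a_{m+1}/(m+1)$ or the tail value $C/(N-k_2)$, the remaining variables split into a block of equal tail values and a shortened head. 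Iterating this descent produces a discrete family of candidate points $x_m$ in which the head has $m-1$ equal low values and $k_2-m$ values matching the tail, and this recursive boundary analysis is the heart of the argument.

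The last step is to minimize $g(x_m)$ as a function of the integer parameter $m$. Treating $m$ as continuous, the derivative $g'(m)$ has a single linear factor, so $g(x_m)$ is monotone between its endpoints $m=1$ and $m=k_2$; hence the global minimum over the discrete list is attained at $m=1$ (configuration (b)) or $m=k_2$ (configuration (a)). The main obstacle I anticipate is the careful boundary descent: one must verify that each face is genuinely of the same type, that the recursive bounds on $\lambda_m$ remain consistent, and that no spurious critical points intrude because $B<0$ when $8\le n\le 10$ -- this last point is exactly where Remark \ref{lem4.3}'s refinement of Lemma \ref{lem4.3} must be invoked to justify the tail equidistribution step in that low-dimensional regime.
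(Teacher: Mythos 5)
Your proposal follows essentially the same route as the paper: decouple the head and tail blocks via Lemma \ref{lem4.3} and Remark 4.5 to reduce to the case $a=0$ with an equidistributed tail, find the interior critical point of $g$ on $\Omega_{k_2}$ from the factorization $3(\lambda_p-\lambda_1)\bigl(\tfrac{2}{3}BC+\lambda_p+\lambda_1\bigr)$, perform the recursive boundary descent to obtain the candidate points $x_m$, and then minimize $g(x_m)$ over $m$ by treating $m$ as continuous, landing on the endpoints $m=1$ and $m=k_2$. The Lagrange-multiplier framing in your opening paragraph is only additional intuition; the substance of the argument coincides with the paper's, including the correct identification of where Remark 4.5 is needed because $B<0$ for $8\le n\le 10$.
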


\begin{proof}[{\bf Proof of Theorem
1.1}]
By using (3.9), we prove Theorem 1.1 in two cases: $C=0$ and $C>0$, where $C=\sum \lambda_\beta$.

If $C=0$, i.e., $s=0$, then $\left( {M,g} \right)$ is flat.

Now let's prove the second case, i.e., $C>0$.

Let $F$ denote sum of the second and third items on the right of (3.9).

First, for $n\geq8$, we prove this theorem.

By Remark 4.5, Lemmas 4.4 and 4.6 with $N = \frac{(n-1)(n+2)}{2}$ and $B=\frac{{2\left( {{n^2} - 11n + 4} \right)}}{{3n\left( {n - 1} \right)\left( {n + 2} \right)}},$ we obtain that the global minimum point of $f=BC\sum\limits_{i = 1}^N {{\lambda _i}^2}  + \sum\limits_{i = 1}^N {{\lambda _i}^3}$ may be
$\left(\frac{C}{N}, \cdots, \frac{C}{N}\right)$, $\left( {\overbrace {0, \cdots ,0}^{k_2},\overbrace {\frac{{C}}{{N - k_2}}, \cdots ,\frac{{C}}{{N - k_2}}}^{N - k_2}} \right)$ or $\left(-\frac{(k_2-1)C}{N-k_2}, \frac{C}{N-k_2},\cdots, \frac{C}{N-k_2}\right).$

When the  minimum point of $f$ is $\left(\frac{C}{N}, \cdots, \frac{C}{N}\right)$, it is clear form (3.7) and (3.8) that $F=0$.

When the  minimum point of $f$ is $\left( {\overbrace {0, \cdots ,0}^{k_2},\overbrace {\frac{{C}}{{N - k_2}}, \cdots ,\frac{{C}}{{N - k_2}}}^{N - k_2}} \right)$, by direct calculation, we get
$$F=\frac{{{k_2}}C^3}{{N^2{{ ({N - {k_2}}) }}}}\left(\frac {2N - {k_2}} {N - {k_2}}+ \frac{n^2-11n+4}{3n} \right).$$
According to the assumption of Theorem 1.1, from the above it can be known that $F>0$.

When the  minimum point of $f$ is $\left(-\frac{(k_2-1)C}{N-k_2}, \frac{C}{N-k_2},\cdots, \frac{C}{N-k_2}\right),$  we have
\begin{equation*}
\begin{aligned}
F= & \frac{{2\left( {{n^2} - 11n + 4} \right)}}{{3n\left( {n - 1} \right)\left( {n + 2} \right)}}C^3\left( {{{\left( { \frac{{{k_2} - 1}}{{N - {k_2}}}} \right)}^2} + \left( {N - 1} \right){{\left( {\frac{{1}}{{N - {k_2}}}} \right)}^2}}- \frac{{{1}}}{{{N}}} \right) \\
 &+ C^3\left( {{{-\left( {  \frac{{{k_2} - 1}}{{N - {k_2}}}} \right)}^3} + \left( {N - 1} \right){{\left( {\frac{{1}}{{N - {k_2}}}} \right)}^3}} - \frac{{{1}}}{{{N^2}}} \right)\\
 =& \left( {\frac{{2\left( {{n^2} - 11n + 4} \right)}}{{3n\left( {n - 1} \right)\left( {n + 2} \right)}}\frac{{\left( {N - 1} \right){k_2}^2}}{{N{{\left( {N - {k_2}} \right)}^2}}} + \frac{{{k_2}^2\left( { - {N^2}{k_2} + 3{N^2} - 3N + {k_2}} \right)}}{{{N^2}{{\left( {N - {k_2}} \right)}^3}}}} \right){C^3}\\
=& \frac{{\left( {N - 1} \right){k_2}^2}C^3}{{N{{\left( {N - {k_2}} \right)}^2}}}\left( {\frac{{2\left( {{n^2} - 11n + 4} \right)}}{{3n\left( {n - 1} \right)\left( {n + 2} \right)}} + \frac{{ - \left( {N + 1} \right){k_2} + 3N}}{{N\left( {N - {k_2}} \right)}}} \right)\\
=& \frac{{2\left( {N - 1} \right){k_2}^2}C^3}{{N{{\left( {N - {k_2}} \right)}^2}}}\frac{{\left( {{n^2} - 11n + 4} \right)\left( {N - {k_2}} \right) + 3n\left( {3N - \left( {N + 1} \right){k_2}} \right)}}{{3n\left( {n - 1} \right)\left( {n + 2} \right)\left( {N - {k_2}} \right)}}.
\end{aligned}
\end{equation*}
According to the assumption of Theorem 1.1, from the above it can be known that $F> 0$.

In a word, combing the above with Lemma 4.3, from (3.9)  we have
\begin{equation*}
\label{1.1}
\frac{1}{2}\Delta {\left| R \right|^2} = {\left| {\nabla R} \right|^2} + \left\langle {\Delta R,R} \right\rangle\geq0.
\end{equation*}
Thus we have
$$\nabla R=0$$
and
$$\left\langle {\Delta R,R} \right\rangle=0.$$
It can be seen from the above proof that $\left\langle {\Delta R,R} \right\rangle=0$ holds if and only if
 ${\lambda _1} =  \cdots  = {\lambda _N} = \frac{C}{N}$.
Hence the curvature operator of the second kind ${\mathop R\limits^ \circ}$ is a constant multiples of $\frac{1}{2}{\mathop {g\circledwedge g}\limits^ \circ}.$

Second, we prove this theorem for $6\leq n\leq7.$

Since the curvature operator of the second kind ${\mathop R\limits^ \circ}$ is nonnegive, i.e. $\lambda_1 = a \ge0$, the function $f\left( {{\lambda _1}, \cdots ,{\lambda _N}} \right)$ in Lemma \ref{lem4.3} becomes
\begin{equation*}
\begin{aligned}
f\left( {a,{\lambda _2}, \cdots ,{\lambda _N}} \right)
=& BC\left( {{a^2} + \sum\limits_{i = 2}^N {{\lambda _i}^2} } \right) + \left( {{a^3} + \sum\limits_{i = 2}^N {{\lambda _i}^3} } \right)\\
=& BC\left( {{a^2} + \sum\limits_{i = 2}^{N - 1} {{\lambda _i}^2}  + {{\left( {C - a - \sum\limits_{i = 2}^{N - 1} {{\lambda _i}} } \right)}^2}} \right) \\
 &+ \left( {{a^3} + \sum\limits_{i = 2}^{N - 1} {{\lambda _i}^3}  + {{\left( {C - a - \sum\limits_{i = 2}^{N - 1} {{\lambda _i}} } \right)}^3}} \right),\\
\end{aligned}
\end{equation*}
where $a \in \left[ {0,\frac{C}{N}} \right]$ and ${\lambda _N} \in \left[ {\frac{C}{N},C} \right]$.
\begin{equation*}
\begin{aligned}
\frac{{\partial f}}{{\partial a}}
=& BC\left( {2a - 2\left( {C - a - \sum\limits_{i = 2}^{N - 1} {{\lambda _i}} } \right)} \right) + \left( {3{a^2} - 3{{\left( {C - a - \sum\limits_{i = 2}^{N - 1} {{\lambda _i}} } \right)}^2}} \right)\\
=& 3\left( {a - {\lambda _N}} \right)\left( {\frac{2}{3}BC + a + {\lambda _N}} \right),\\
\end{aligned}
\end{equation*}
Thus $\frac{{\partial f}}{{\partial a}} = 0 $ implies that ${a} =  - \frac{2}{3}BC - {\lambda _N}$ or ${a} = {\lambda _N}$. It is simple to prove that ${\lambda _N} >- \frac{2}{3}BC - {\lambda _N}$ for ${\lambda _N} \in \left[ {\frac{C}{N},C} \right]$. For $6 \leq n \leq 7$, we compute
\begin{equation*}
\begin{aligned}
 -({\lambda _N} + \frac{2}{3}BC)
 \leq&-\left( {\frac{1}{N} + \frac{2}{3}B} \right)C\\
=& -\frac{{4\left( {{n^2} - \frac{13}{2}n + 4} \right)}}{{9n\left( {n - 1} \right)\left( {n + 2} \right)}}C\\
\leq& 0.
\end{aligned}
\end{equation*}
From the above, it is easy to see that
the minimum point of $f$ satisfies ${\lambda _1}={\lambda _N}$.
 Hence the  minimum point of $f$ is $\left(\frac{C}{N}, \cdots, \frac{C}{N}\right)$, and $F=0$.
 The remainder of the argument is analogous to the front, which completes the proof of Theorem 1.1.
\end{proof}

\subsection{Proof of Theorem 1.2}

In an Einstein manifold, for ${\left| R \right|^2} = {\left| W \right|^2} + \frac{{2{s^2}}}{{n(n - 1)}}$, we have
\begin{equation}
\begin{aligned}
\alpha
=& {R_{tpsq}}{R_{tpkl}}{R_{sqkl}}\\
=& \left( {{W_{tpsq}} + \frac{s}{{n(n - 1)}}\left( {{g_{ts}}{g_{pq}} - {g_{tq}}{g_{ps}}} \right)} \right)\left( {{W_{tpkl}} + \frac{s}{{n(n - 1)}}\left( {{g_{tk}}{g_{pl}} - {g_{tl}}{g_{pk}}} \right)} \right)\\
 &\left( {{W_{sqkl}} + \frac{s}{{n(n - 1)}}\left( {{g_{sk}}{g_{ql}} - {g_{sl}}{g_{qk}}} \right)} \right)\\
=& {W_{tpsq}}{W_{tpkl}}{W_{sqkl}} + \frac{{3s}}{{n(n - 1)}}{W_{tpsq}}{W_{tpkl}}\left( {{g_{sk}}{g_{ql}} - {g_{sl}}{g_{qk}}} \right)\\
 &+ \frac{{{s^3}}}{{{n^3}{{(n - 1)}^3}}}\left( {{g_{ts}}{g_{pq}} - {g_{tq}}{g_{ps}}} \right)\left( {{g_{tk}}{g_{pl}} - {g_{tl}}{g_{pk}}} \right)\left( {{g_{sk}}{g_{ql}} - {g_{sl}}{g_{qk}}} \right)\\
=& {W_{tpsq}}{W_{tpkl}}{W_{sqkl}} + \frac{{6s}}{{n(n - 1)}}{\left| W \right|^2} + \frac{{4{s^3}}}{{{n^2}{{(n - 1)}^2}}}\\
=& {W_{tpsq}}{W_{tpkl}}{W_{sqkl}} + \frac{{6s}}{{n(n - 1)}}{\left| R \right|^2} - \frac{{8{s^3}}}{{{n^2}{{(n - 1)}^2}}}\\
\end{aligned}
\end{equation}
and
\begin{equation}
\begin{aligned}
\beta
=& {R_{tspq}}{R_{tjpl}}{R_{sjql}}\\
=& \left( {{W_{tspq}} + \frac{s}{{n(n - 1)}}\left( {{g_{tp}}{g_{sq}} - {g_{tq}}{g_{sp}}} \right)} \right)\left( {{W_{tjpl}} + \frac{s}{{n(n - 1)}}\left( {{g_{tp}}{g_{jl}} - {g_{tl}}{g_{jp}}} \right)} \right)\\
 &\left( {{W_{sjql}} + \frac{s}{{n(n - 1)}}\left( {{g_{sq}}{g_{jl}} - {g_{sl}}{g_{jq}}} \right)} \right)\\
=& {W_{tspq}}{W_{tjpl}}{W_{sjql}} + \frac{{3s}}{{n(n - 1)}}{W_{tspq}}{W_{tjpl}}\left( {{g_{sq}}{g_{jl}} - {g_{sl}}{g_{jq}}} \right)\\
 &+ \frac{{{s^3}}}{{{n^3}{{(n - 1)}^3}}}\left( {{g_{tp}}{g_{sq}} - {g_{tq}}{g_{sp}}} \right)\left( {{g_{tp}}{g_{jl}} - {g_{tl}}{g_{jp}}} \right)\left( {{g_{sq}}{g_{jl}} - {g_{sl}}{g_{jq}}} \right)\\
=& {W_{tspq}}{W_{tjpl}}{W_{sjql}} + \frac{{3s}}{{2n(n - 1)}}{\left| W \right|^2} + \frac{{(n - 2){s^3}}}{{{n^2}{{(n - 1)}^2}}}\\
=& {W_{tspq}}{W_{tjpl}}{W_{sjql}} + \frac{{3s}}{{2n(n - 1)}}{\left| R \right|^2} + \frac{{(n - 5){s^3}}}{{{n^2}{{(n - 1)}^2}}}.\\
\end{aligned}
\end{equation}
Due to Jack and Parker's result \cite{JP} that  for $n \le 5$,
$${W_{tpsq}}{W_{tpkl}}{W_{sqkl}} = 2 {W_{tspq}}{W_{tjpl}}{W_{sjql}},$$
Combing the above with (4.8) and (4.9), we get
\begin{equation}\alpha  = 2\beta  + \frac{{3s}}{{n(n - 1)}}{\left| R \right|^2} - \frac{{2{s^3}}}{{{n^2}(n - 1)}}.\end{equation}
It follows from (3.1) and (4.10) that
\begin{equation}\langle \Delta R,R\rangle  = 2{R_{lt}}{R_{ijkl}}{R_{ijkt}} - 6\beta  - \frac{{3s}}{{n(n - 1)}}{\left| R \right|^2} + \frac{{2{s^3}}}{{{n^2}(n - 1)}}.\end{equation}
By Lemma \ref{3.3} and (4.10), we get
\begin{equation}8\sum {{\lambda _\alpha }^3}  =  - 6\beta  + \frac{{3s}}{{n(n - 1)}}{\left| R \right|^2} - \frac{{2{s^3}}}{{{n^2}(n - 1)}} + \frac{{8{s^3}}}{{{n^3}}}.\end{equation}
Inserting (4.12) into (4.11) gives
\begin{equation*}
\begin{aligned}
\langle \Delta R,R\rangle
=& 2{R_{lt}}{R_{ijkl}}{R_{ijkt}}{\rm{ + }}8\sum {{\lambda _\alpha }^3}  - \frac{{6s}}{{n(n - 1)}}{\left| R \right|^2} - \frac{{4(n - 2){s^3}}}{{{n^3}(n - 1)}}\\
=& \frac{{8(n - 4)s}}{{3n(n - 1)}}\sum {{\lambda _\alpha }^2}  + 8\sum {{\lambda _\alpha }^3}  - \frac{{4(n + 2)}}{{3{n^3}(n - 1)}}{s^3}\\
=& \frac{{16(n - 4)}}{{3(n - 1)(n + 2)}}\sum {{\lambda _\alpha }} \left( {\sum {{\lambda _\alpha }^2}  - \frac{1}{N}{{\left( {\sum {{\lambda _\alpha }} } \right)}^2}} \right)\\
 &+ 8\left( {\sum {{\lambda _\alpha }^3}  - \frac{1}{{{N^2}}}{{\left( {\sum {{\lambda _\alpha }} } \right)}^3}} \right).\\
\end{aligned}
\end{equation*}
The remainder of the argument is analogous to that in Theorem 1.1, which completes the proof of Theorem 1.2.

\bibliographystyle{amsplain}

\end{document}